\numberwithin{equation}{section}
\newtheorem{thm}{Theorem}[section]
\newtheorem*{TA}{Theorem A}
\newtheorem{lemma}{Lemma}[section]
\newcommand{\al}{\alpha}
\newcommand{\omd}{\omega_{j,D}}
\newcommand{\om}{\omega}
\newcommand{\A}{\mathcal{A}}
\newcommand{\ph}{\varphi}
\newcommand{\PP}{\mathcal{P}}
\newcommand{\wtl}{\widetilde}
\newcommand{\wht}{\widehat}
\DeclareMathOperator{\core}{core}
\DeclareMathOperator{\SLI}{\textnormal{SLI}}
\DeclareMathOperator{\inq}{\textnormal{inq}}
\DeclareMathOperator{\sol}{\textnormal{sol}}
\newcommand{\brr}{\bar {\mathrm{r}}}
\newcommand{\rr}{\mathrm{r}}
\begin{document}

\title
[ Intersecting subgroups in free groups and  linear programming]
{The intersection of  subgroups in free groups and linear programming}
 \author{S. V. Ivanov }
  \address{  Department of Mathematics\\
 University of Illinois \\
  Urbana\\  IL 61801\\ U.S.A. } \email{ivanov@illinois.edu}
\thanks{Supported in smaller part by  the NSF under  grant  DMS 09-01782.}
\keywords{Free groups, intersection of subgroups, rank, linear programming.}
\subjclass[2010]{Primary  20E05, 20E07, 20F65; Secondary 68Q25, 90C90.}

\begin{abstract}
We study the intersection of finitely generated  subgroups of free  groups by utilizing the method of linear programming.
We prove that if $H_1$ is a finitely generated
subgroup of a free group $F$, then the WN-coefficient $\sigma(H_1)$ of $H_1$ is rational and can be computed in deterministic exponential time {}in the size of $H_1$. This coefficient $\sigma(H_1)$ is the  minimal nonnegative real number such that, for every  finitely generated  subgroup $H_2$ of  $F$, it is true that
$\bar \rr(H_1, H_2)  \le  \sigma(H_1) \bar \rr(H_1) \bar \rr(H_2)$, where  $\bar{ {\rm r}} (H) := \max ( {\rm r} (H)-1,0)$ is the reduced rank of $H$, $\rr (H)$ is the rank of $H$, and  $\bar \rr(H_1, H_2)$ is the reduced rank of the generalized intersection of $H_1$ and $H_2$.
We also show the existence of a  subgroup $H_2^* = H_2^*(H_1)$ of $F$ such that  $\bar \rr(H_1, H_2^*)  =  \sigma(H_1) \bar \rr(H_1) \bar \rr(H_2^*)$, the Stallings graph $\Gamma(H_2^*)$ of  $H_2^*$ has at most doubly exponential size {}in the size of $H_1$ and   $\Gamma(H_2^*)$  can be constructed in  exponential time {}in the size of $H_1$.
\end{abstract}

\maketitle
\section{Introduction}

Let $F$ be a finitely generated free group, let  $\rr(F)$ denote the rank of $F$ and let
$\brr(F) := \max (\rr(F)-1,0)$ denote the reduced rank of $F$.
Let $H_1$ and $H_2$ be finitely generated subgroups of  $F$.  Hanna Neumann \cite{N1}  proved
that
$$
\brr (H_1 \cap H_2) \le 2 \bar \rr(H_1) \bar \rr(H_2)
$$
and conjectured that  $\brr (H_1 \cap H_2) \le  \bar \rr(H_1) \bar \rr(H_2)$.
\smallskip

These result and conjecture of Hanna Neumann were strengthened by Walter Neumann \cite{N2}  by
considering a generalized intersection of $H_1$ and $H_2$.
Let  $S(H_1, H_2)$ denote a set of  representatives of those double cosets $H_1 t H_2$ of $F$, $t \in F$,
that have the property $H_1 \cap t H_2 t^{-1} \ne  \{ 1 \}$. Walter Neumann \cite{N2}  proved
that the set $S(H_1, H_2)$ is finite,  the reduced rank $\bar \rr(H_1, H_2)$ of the  generalized intersection
of $H_1$ and $H_2$     satisfies
\begin{equation}\label{res}
\bar \rr(H_1, H_2) :=   \sum_{s \in S(H_1, H_2)} \bar \rr(H_1\cap s H_2 s^{-1})
\le  2 \bar \rr(H_1) \bar \rr(H_2) ,
\end{equation}
and he conjectured that
\begin{equation}\label{conjs}
\bar \rr(H_1, H_2) =   \sum_{s \in S(H_1, H_2)} \bar \rr(H_1\cap s H_2 s^{-1})
\le   \bar \rr(H_1) \bar \rr(H_2) .
\end{equation}
This strengthened version of the  Hanna Neumann conjecture  was proved by
Friedman \cite{Fr} and Mineyev \cite{Mn}, see also Dicks's proof \cite{Dp} and a proof in \cite{I15a}.
\smallskip

Now suppose that $H_1$ is a fixed finitely generated subgroup of $F$.
We will say that a real number $\sigma(H_1) \ge 0$ is the {\em Walter Neumann coefficient} for  $H_1$, or,  briefly, the WN-coefficient for $H_1$,   if, for every  finitely generated  subgroup $H_2$ of $F$, we have
\begin{equation}\label{e1}
\brr (H_1 , H_2)   \le \sigma(H_1)  \brr(H_1) \brr(H_2)
\end{equation}
and  $\sigma(H_1)$ is minimal  with this property.
It is clear that if $H_1$ is noncyclic then
$$
\sigma(H_1)  = \sup_{H_2} \bigg\{  \frac{\brr (H_1,  H_2)}{\brr (H_1) \brr (H_2) } \bigg\}
$$
over all  finitely generated  noncyclic subgroups $H_2$ of $F$.
\smallskip

In this article, we are concerned with algorithmic computability of the  WN-coefficient $\sigma(H_1)$ for a
finitely generated subgroup $H_1$ of $F$ and with other properties of this number $\sigma(H_1)$.
Utilizing the method of linear programming, we will prove the following.

\begin{thm}\label{th1} Suppose that $F$ is a free group of finite rank
and $H_1$ is a finitely generated noncyclic subgroup of $F$. Then the following  are true.
\smallskip

$\rm{(a)}$ There exists a linear programming problem (LP-problem) associated with $H_1$
\begin{equation}\label{lpa}
\PP(H_1) = \max\{ cx \mid Ax \le b  \}
\end{equation}
with integer coefficients whose solution is equal to $-\sigma(H_1) \brr (H_1)$.
\smallskip

$\rm{(b)}$  There is a finitely generated  subgroup $H_2^* $ of $F$, $H_2^*= H_2^*(H_1)$ which corresponds to  a vertex
solution of the dual problem
$$
\PP^*(H_1) = \min \{ b^{\top}  y \mid A^{\top}y = c^{\top} , \, y \ge 0  \}
$$
of the primal LP-problem  \eqref{lpa} such that
$\bar \rr(H_1, H_2^*)  =  \sigma(H_1)
\bar \rr(H_1) \bar \rr( H_2^*)$. In particular,  the WN-coefficient $\sigma(H_1)$ of $H_1$ is rational and satisfies
$\frac{1}{\brr (F)} \le \sigma(H_1) \le 1$.

Furthermore, if $\Gamma(H_1)$ and $\Gamma(H_2^*)$ denote the Stallings graphs representing the subgroups $H_1$ and $H_2^*$, resp.,
$| E \Gamma |$ denotes the number of oriented edges in the graph $\Gamma$, and  $m-1$ is the rank of $F$,  then
the size of $\Gamma(H_2^*)$ is at most doubly exponential {}in the size of $\Gamma(H_1)$, specifically,
$$
| E  \Gamma(H_2^*) | < 2^{  2^{ | E  \Gamma(H_1) |/2 + 2\log_2 m  } } .
$$

$\rm{(c)}$ Assume that $H_1$ is given by a finite generating set or by its Stallings graph.
Then, in deterministic exponential time {}in the size of the input, one can write down and solve the  LP-problem
\eqref{lpa} associated with $H_1$.  In particular, the WN-coefficient $\sigma(H_1)$ of $H_1$ is computable in deterministic exponential time  {}in the size of the input.

In addition,  the Stallings graph $\Gamma(H_2^*)$ of the subgroup $H_2^*$ of part (b) can be constructed
 in deterministic exponential time {}in the size of the input.
\end{thm}

We remark  that the results similar to those of Theorem~\ref{th1} are obtained by the author \cite{I15} for factor-free subgroups of free products of finite groups. However, the arguments of \cite{I15} do not apply to free products of infinite groups and here we develop analogous techniques suitable for free groups. For a generalization of the conjecture \eqref{conjs} to subgroups of free products of groups and relevant results, the reader is referred to articles
\cite{DIv}, \cite{DIv2}, \cite{Iv08}, \cite{I15a}, \cite{I15}.
\smallskip

Similarly to \cite{I15},  the correspondence
between subgroups $H_2$ and vectors $y(H_2)$ of the feasible polyhedron  $\{ y  \mid A^{\top}y = c^{\top} , \, y \ge 0  \}$  of the dual LP-problem $\PP^*(H_1)$, mentioned in part (b) of Theorem~\ref{th1}, plays an important role in proofs and is reminiscent of the correspondence between (resp. almost) normal surfaces in 3-dimensional manifolds and their (resp. almost)  normal vectors in the Haken theory of normal surfaces and its generalizations, see \cite{Haken}, \cite{HLP}, \cite{Hemion}, \cite{Iv08s}, \cite{JT}. In particular, the idea of a vertex solution works equally well both in the context of almost normal surfaces \cite{Iv08s}, see also \cite{HLP}, \cite{JT}, and in the context of subgroups of  free groups, providing in either  situation  both the connectedness of the underlying object associated with a vertex solution and an upper bound on the size of the underlying  object.
\smallskip

It is worthwhile to mention that our construction of the Stallings graph $\Gamma(H_2^*)$ in part~(c) of   Theorem~\ref{th1} is somewhat succinct (cf. the definition of succinct representations of  graphs, see \cite{PCC}) in the sense that,
despite the fact that the size of $\Gamma(H_2^*)$  could be doubly exponential, we are able to  give a description of $\Gamma(H_2^*)$  in exponential time. In particular, the vertices of  $\Gamma(H_2^*)$ are represented by exponentially long bit strings and the edges of $\Gamma(H_2^*)$ are  drawn in blocks. As a result,
we can find out in exponential time  whether two given vertices of  $\Gamma(H_2^*)$ are connected by an edge.
\smallskip

In view of Theorem~\ref{th1}, it is of interest to look at two properties of finitely generated subgroups of  free groups
introduced by Dicks and Ventura \cite{DV}. Recall that a finitely generated subgroup $H$ of a free group $F$
is called {\em compressed}, see \cite{DV}, if, for every subgroup $K$ of $F$ such that $H  \subseteq K$, we have $\brr(H) \le \brr(K)$.
A finitely generated subgroup $H$ of a free group $F$
is called {\em inert}, see \cite{DV}, if for every subgroup $K$ of $F$, one has $\brr(H\cap K ) \le \brr(K)$.
It is immediate from the definitions that every inert subgroup is compressed.  The problem whether every compressed subgroup is inert is stated by  Dicks and Ventura \cite{DV} and it is still unresolved.
\smallskip

We say that a finitely generated subgroup $H$ of a free group $F$
is  {\em strongly inert} if, for every subgroup $K$ of $F$, we have  $\brr(H, K ) \le \brr(K)$.
Clearly, a strongly inert subgroup is inert. It would be of interest to  find an example, if it exists, to distinguish between these two classes of inert and strongly inert subgroups and, more generally, to find a finitely generated noncyclic subgroup $H$ of  $F$ such that
$$
\sup_{K}  \bigg\{ \frac{\brr (H \cap  K)}{\brr (H) \brr (K) } \bigg\} < \sigma(H) ,
$$
where the supremum, as before, is taken over all
finitely generated noncyclic subgroups $K$ of  $F$. Another natural question is to find an algorithm that computes this number
$\sup_{K} \Big\{ \frac{\brr (H \cap  K)}{\brr (H) \brr (K) } \Big\}$ which could be called the Hanna Neumann coefficient of $H$.
\smallskip

While we are not able to distinguish between these three classes of compressed, inert, and strongly inert subgroups of $F$, our
algorithms and their running times, that recognize two of these classes, are quite different.

\begin{thm}\label{pr1} Suppose that $F$ is a free group of finite rank
and $H$ is a finitely generated noncyclic subgroup of $F$ given by a finite generating set  or by its Stallings graph.
Then the following hold true.

$\rm{(a)}$  There is an algorithm that decides, in deterministic exponential time  {}in the size of $H$, whether $H$ is strongly inert.

$\rm{(b)}$  There is an algorithm that verifies, in nondeterministic polynomial time  {}in the size of $H$, whether $H$ is {\em not} compressed.
\end{thm}

Summarizing, we see that the decision problem that inquires whether a finitely generated subgroup $H$ of $F$ is strongly inert is in \textsf{EXP}, the decision problem that  asks whether  $H$  is inert is not known to be decidable, and
the decision problem that inquires whether  $H$  is compressed is in \textsf{coNP}  (for the definition of computational complexity classes \textsf{EXP}, \textsf{coNP} see \cite{AB} or \cite{PCC}).

\section{Preliminaries}

Suppose that $X$ is a  graph. Let $VX$ denote the set of vertices of $X$ and let $E X$ be the set of oriented edges of $X$.  If $e \in EX$  then $e^{-1}$ denotes the edge  with the opposite orientation, $e^{-1} \ne e$.
\smallskip

For $e \in EX$, let $e_-$ and $ e_+$ denote the initial and terminal, resp., vertices of $e$. A path $p = e_1 \cdots e_k$, where $e_i \in EX$,
$(e_i)_+ =  (e_{i+1})_-$, $i =1, \dots, k-1$, is called {\em reduced } if, for every  $i =1, \dots, k-1$, one has $e_i \ne e_{i+1}^{-1}$. The {\em length} of a path $p = e_1 \cdots e_k$ is $k$, denoted $|p| =k$.  The initial vertex of $p$ is  $p_- = (e_1)_-$ and the terminal vertex of $p$ is  $p_+ = (e_k)_+$. A path $p$ is {\em closed } if $p_- = p_+$.
\smallskip

If $p = e_1 \cdots e_k$ is a closed path, then a {\em cyclic permutation}
$\bar p$ of $p$ is a path of the form $e_{1+i}e_{2+i}  \cdots e_{k+i}$, where $i =0,1,\dots,k$  and the indices are considered $\mod k$.
The subgraph of $X$ that consists of edges of all closed paths $p$ of $X$ such that  $|p| >0$ and every cyclic permutation
of $p$ is reduced,  is called the {\em core} of $X$, denoted $\core(X)$.
\smallskip

Let $U$ be a finite connected graph such that  $\core(U) = U$, let  $o \in VU$ and let $F = \pi_1(U, o)$ be the fundamental group of $U$ at $o$. Then $F$ is a free group of rank $\rr(F) = |  E U |/2 - | VU | +1$, where $| B |$ is the cardinality of a finite set $B$, and the  elements of $F$  can be thought of as reduced closed paths in $U$ starting at $o$.
\smallskip

Following Stallings \cite{St}, see also  \cite{D}, \cite{KM}, with every (finitely generated) subgroup $H$ of
$F_U = \pi_1(U, o)$, we can associate a (resp. finite) graph $Y = Y(H)$ and a map $ \beta : Y \to U$ of graphs so that $H$ is isomorphic to $\pi_1(Y, o_{Y})$, where $o_{Y} \in VY$, $\beta(o_{Y}) = o$, and a reduced path $p \in \pi_1(U, o)$ belongs to $H$ if and only if there  is a reduced path $p_H \in
\pi_1(Y, o_{Y})$ such that   $\beta(p_H) = p$.
In addition, we may assume that $\beta$ is a locally injective map of graphs, i.e., the restriction of $\beta$ on  a regular neighborhood of every vertex of $Y$ is injective.
We call a locally injective map of graphs an {\em immersion}.
Since $\beta$ is an immersion, it follows that every reduced path in $H \subseteq \pi_1(U, o)$  has a unique preimage in $Y$.
\smallskip

Consider two finitely generated subgroups $H_1$ and $H_2$ of the free group
$F_U = \pi_1(U, o)$. Pick a set $S(H_1,H_2)$ of representatives of those double cosets $H_1 g H_2$,
$g \in F_U$, for which the intersection $H_1 \cap g H_2 g^{-1}$ is nontrivial.
\smallskip

Let $Y_1, Y_2$ be Stallings graphs of the subgroups $H_1, H_2$ and let $Y_1 \underset U \times  Y_2$
denote the  pullback of the maps
$\beta_i : Y_i \to U$, $i =1,2$. Recall that
\begin{gather*}
V(Y_1 \underset U \times  Y_2) = \{ (v_1, v_2) \mid  v_i \in VY_i, \beta_1(v_1) =  \beta_2(v_2)  \} , \\
E(Y_1 \underset U \times  Y_2) = \{ (e_1, e_2) \mid  e_i \in EY_i, \beta_1(e_1) =  \beta_2(e_2)  \} ,
\end{gather*}
and $(e_1, e_2)_- = ((e_1)_-, (e_2)_-)$, \ $(e_1, e_2)_+ = ((e_1)_+, (e_2)_+)$.
\smallskip

According to Walter Neumann \cite{N2}, the set $S(H_1, H_2)$ is finite and the  nontrivial intersections  $H_1 \cap s H_2 s^{-1}$, where $s \in S(H_1, H_2)$, are in bijective correspondence with the connected components $W_s$ of the core $W := \core(Y_1 \underset U \times  Y_2)$. Moreover, for every $s \in S(H_1,H_2)$,
we have
\begin{gather*}
\brr ( H_1 \cap s H_2 s^{-1} ) = \brr (W_s) = |  E W_s|/2 - | V W_s | .
\end{gather*}
Hence,
\begin{gather*}
\sum_{s \in S(H_1,H_2)} \brr (H_1 \cap s H_2 s^{-1} ) = \brr (W) = | E W|/2 - |  V W |  .
\end{gather*}

Let $\al'_i$ denote the projection map $Y_1 \underset U \times  Y_2 \to Y_i$,
$i = 1,2$, i.e., $\al'_i((e_1, e_2)) = e_i$ and $ \al'_i((v_1, v_2)) = v_i$. Restricting  $\al'_i$ on $W \subseteq  Y_1 \underset U \times  Y_2 $, we obtain the map  $\al_i : W \to Y_i$, $i = 1,2$. In this notation, we have a commutative diagram depicted in Figure~1.

\par\smallskip\par\bigskip \centerline{
\setlength{\unitlength}{.55mm}
 \begin{picture}(50,38)
  \put(0,16){\makebox(0,0)[t]{$Y_1$}}
  \put(6,32){\makebox(0,0)[t]{$\alpha_1$}}
   \put(5,4){\makebox(0,0)[t]{$\beta_1$}}
    \put(33,32){\makebox(0,0)[t]{$\alpha_2$}}
   \put(36,4){\makebox(0,0)[t]{$\beta_2$}}
   \put(20,38){\makebox(0,0)[t]{$W$}}
  \put(21,-7){\makebox(0,0)[t]{$U$}}
 \put(41,16){\makebox(0,0)[t]{$Y_2$}}
    \thinlines
  \put(14,30){\vector(-1,-1){12}}
   \put(25,30){\vector(1,-1){12}}
  \put(5,8){\vector(1,-1){12}}
  \put(35,8){\vector(-1,-1){12}}
       \put(9,-22){Figure~1}
\end{picture}
}\par\vspace{16mm}

In particular, if $X \in \{ Y_1, Y_2, W, U \}$, then there is a canonical immersion
$$\ph : X \to U ,
$$
where $\ph = \beta_i$ if $X = Y_i$, $i =1,2$,
$\ph = \mbox{id}_U$ if $X = U$ and  $\ph =  \beta_i\al_i  $ if $X = W$.
More generally, we will say that $X$ is a {\em $U$-graph} if $X$ is equipped with a graph map $\ph : X \to U$. A $U$-graph  $X$ is {\em reduced} if  $\ph$ is an immersion. For example,  $Y_1, Y_2, W, U$ are reduced $U$-graphs.
If $x \in VX \cup EX$  then $\ph(x) \in VU \cup EU$ is called the {\em label} of $x$.
\smallskip

It will be convenient to work with a graph $U$ of a special form which we denote $U_m$. The graph $U_m$  contains two vertices $o_1, o_2$,
 $VU_m := \{ o_1, o_2 \}$, and the vertices $o_1, o_2$ are connected by $m \ge 3$ nonoriented edges so that the oriented edges $a_1, \dots, a_m \in  E U_m$   start at $o_1$ and end in  $o_2$, see Figure~2,  where the case $m=3$ is depicted.

\begin{center}
\begin{tikzpicture}[scale=.92]
\draw  (0.1,0) ellipse (1 and 1);
\draw[-latex](0,0)--(0.2,0);
\draw(-0.9,0)--(1.1,0);
\draw  (-0.9,0)[fill = black]circle (0.07);
\draw  (1.1,0)[fill = black]circle (0.07);
\draw[-latex](0,1) -- (0.2,1);
\draw[-latex](0,-1) -- (0.2,-1);
\node at (0.1,0.36) {$a_2$};
\node at (0.1,-0.64) {$a_3$};
\node at (0.1,1.36) {$a_1$};
\node at (1.7,1) {$U_3$};
\node at (0.1,-1.84) {Figure 2};
\end{tikzpicture}
\end{center}

We will be considering mostly $U_m$-graphs, where $m \ge 3$.  Since $U_m$ is fixed, we will be writing  $Y_1 \times  Y_2$ in place of  $Y_1 \underset {U_m} \times  Y_2$.
\smallskip

Denote
\begin{gather}\label{AL}
\A := \{ a_1, \dots, a_m \} .
\end{gather}

Let $X$ be a $U_m$-graph.
A vertex $v \in VX$ is called an {\em $i$-vertex} if $\ph(v) = o_i$, $i =1,2$.
Clearly, if $e_+$ is a 2-vertex then
$ \ph(e) \in \A  $
and if $e_+$ is a 1-vertex then  $\ph(e) \in  \A^{-1} = \{ a_1^{-1}, \dots, a_m^{-1} \}$.
\smallskip

An edge $e \in EX$ is called a $b$-edge, where $b \in \A$, if $\ph(e) = b$. The set of all  $b$-edges of $X$ is denoted $E_{b}X$. Clearly,
$$
\sum_{b \in \A} |E_b X| = |E X|/2 .
$$

\section{The system of linear inequalities $\SLI[Y_1]$}

Suppose that  $Y_1$ is a finite reduced $U_m$-graph such that $Y_1 = \core(Y_1)$ and
$$
\brr(Y_1) := - \chi(Y_1) = |E Y_1|/2 - |V Y_1| >0,
$$
where $\chi(Y_1)$ is the Euler characteristic of $Y_1$ (since $E Y_1$ is the set of oriented edges of $Y_1$, we use $|E Y_1|/2$ in $\chi(Y_1)$).  This graph $Y_1$ will be held fixed throughout Sections~3--4.
\smallskip

Let $(A_1, \dots, A_m)$  be an $m$-tuple of sets $A_j$ such that
\begin{gather}\label{a00}
A_j \subseteq E_{a_j}Y_1 ,  \quad j = 1, \dots, m .
\end{gather}
Recall that $m = | \A |$ and $m \ge 3$, see \eqref{AL}.
\smallskip

Let $e, f \in \bigcup_{j=1}^m A_j$. We say that the edges
$e, f$ are {\em $i$-related}, written $e \sim_i f$, if $e_- = f_-$ in $Y_1$ when $i=1$ or $e_+ = f_+$ in $Y_1$ when $i=2$.
Note that it follows from  \eqref{a00} and from $Y_1$ being a $U_m$-graph  that $e_-, f_-$ are 1-vertices while
$e_+, f_+$ are 2-vertices. Clearly, ${\sim_i}$ is an equivalence relation on the set $\bigcup_{j=1}^m A_j$.
\smallskip

Let $[e]_{\sim_i}$ denote the equivalence class of an edge $e \in \bigcup_{j=1}^m A_j$  relative to this
equivalence relation ${\sim_i}$ and let $| [e]_{\sim_i} |$ denote the cardinality of $[e]_{\sim_i}$.
\smallskip

We will say that an $m$-tuple  $(A_1, \dots, A_m)$ is {\em $i$-admissible}, where $i =1,2$ is fixed, if
the union $\bigcup_{j=1}^m A_j$ is not empty and, for every $e \in \bigcup_{j=1}^m A_j$, we have $ | [e]_{\sim_i} | >1$.
It is clear that, for every $e \in \bigcup_{j=1}^m A_j$,
\begin{gather}\label{a1}
2 \le | [e]_{\sim_i} | \le k \le m ,
\end{gather}
where $k$ is the number of nonempty sets  $A_j$ in the tuple $(A_1, \dots, A_m)$.
\smallskip

If  $(A_1, \dots, A_m)$ is an $i$-admissible tuple, we define the number  $N_i(A_1, \dots, A_m)$ to be the sum
$$
\sum ( | [e]_{\sim_i} | -2)
$$
over all equivalence classes $[e]_{\sim_i}$ of the equivalence  relation ${\sim_i}$
on $\bigcup_{j=1}^m A_j$. Hence,
\begin{gather}\label{a2}
N_i(A_1, \dots, A_m) :=  \sum_{[e]_{\sim_i}} ( | [e]_{\sim_i} | -2) .
\end{gather}

We note that
$$
\brr(Y_1) = | E Y_1 |/2 - |V Y_1 | = \tfrac 12  \sum_{u \in VY_1} (\deg u -2) ,
$$
where $\deg u$ is the degree of a vertex $u \in VY_1$, i.e.,
$\deg u$ is the number of edges $e \in EY_1$ such that $e_+ = v$.
\smallskip

Let $V_iY_1$ denote the set of all $i$-vertices of $Y_1$, $i=1,2$.  Define
$$
\brr_i(Y_1) := \tfrac 12  \sum_{u \in V_iY_1} (\deg u -2) .
$$
Observe that $\brr(Y_1)= \brr_1(Y_1)+\brr_2(Y_1)$ and that
\begin{align}\label{a3a}
\begin{split}
N_i(A_1, \dots, A_m) & :=  \sum_{[e]_{\sim_i}} ( | [e]_{\sim_i} | -2) \\
 & \le \sum_{u \in V_iY_1} (\deg u -2) = 2 \brr_i(Y_1) \le  2 \brr(Y_1) .
\end{split}
\end{align}

For every nonempty set $B \subseteq E_{a_j} Y_1$, we
consider a variable $x_{j, B}$.
We also introduce a special variable $x_s$.
Note that, for given $j$, the set of  all variables $x_{j, B}$ is finite and
its cardinality is equal to $2^{| E_{a_j} Y_1 |}-1$.

\smallskip

Now we will define a system of inequalities in these variables $x_{j, B}$, $x_s$ so that each inequality
is determined by means of an $i$-admissible tuple $(A_1, \dots, A_m)$.
\smallskip

For an $i$-admissible tuple $(A_1, \dots, A_m)$, let $A_{j_1}, \dots, A_{j_k}$ denote all nonempty sets
in  $(A_1, \dots, A_m)$.
\smallskip

If $i=1$ then the inequality, corresponding to the $1$-admissible tuple $(A_1, \dots, A_m)$, is defined as follows
\begin{equation}\label{a3}
- x_{j_1, A_{j_1}}   - x_{j_2, A_{j_2}}   - \ldots - x_{j_k, A_{j_k}}  - (k-2)x_s \le   - N_1(A_1, \dots, A_m)  .
\end{equation}

If $i=2$  then  the inequality, corresponding to the $2$-admissible tuple $(A_1, \dots, A_m)$, is defined as follows
\begin{equation}\label{a4}
x_{j_1, A_{j_1}}   + x_{j_2, A_{j_2}}   + \ldots + x_{j_k, A_{j_k}}  - (k-2)x_s \le  - N_2(A_1, \dots, A_m)  .
\end{equation}

Let
\begin{equation}\label{SLI}
\SLI[Y_1]
\end{equation}
denote  the system of all linear inequalities \eqref{a3}--\eqref{a4} constructed for all $i$-admissible tuples
$(A_1, \dots, A_m)$, $i =1,2$. Clearly, $\SLI[Y_1]$ is finite.
\medskip

Assume that the map
$$
\al_2 : \core(Y_1  \times  Y_2) \to Y_2
$$
is surjective, i.e.,
$\al_2 ( \core(Y_1  \times  Y_2) ) = Y_2$.
\smallskip

For every $i$-vertex $u \in VY_2$,  consider all the edges  $e_1, \dots, e_k \in EY_2$ such that   $(e_1)_-=  \cdots = (e_k)_-= u$ if $i=1$  or  $(e_1)_+=  \cdots  = (e_k)_+= u$ if $i=2$,
so  $\ph(e_1), \ldots, \ph(e_k)   \in \A$, see \eqref{AL},  and
the edges  $e_1, \ldots, e_k$ start or finish at  $u$.  Denote $\ph(e_\ell) = a_{j_\ell}$ for $\ell =1, \ldots, k$.
\smallskip

If $j \not\in \{ j_1, \dots, j_k\}$, we set  $A_j(u) := \varnothing$.
Otherwise, $j = j_\ell$ for some $\ell = 1, \ldots, k$,  and we set
\begin{equation}\label{Adu}
A_{j_\ell}(u) := \al_1 \al_2^{-1}  (e_{\ell}) \subseteq E_{j_\ell}Y_1 ,
\end{equation}
where $\al_2^{-1}  (e_{\ell})$ is the full preimage of the edge $e_{\ell}$ in  $\core(Y_1  \times  Y_2)$. It is immediate from the definitions that the tuple  $(A_1(u), \dots, A_m(u))$ is $i$-admissible. Recall that the  graphs $Y_1$ and $Y_2$
coincide with their cores and have no vertices of degree less than 2.
\smallskip

Since every $i$-admissible tuple $(A_1, \dots, A_m)$  gives rise  to an inequality
\eqref{a3}  if $i=1$ or to an inequality \eqref{a4} if $i=2$ and every $i$-vertex $u \in VY_2$ defines, as indicated above,
an $i$-admissible tuple $(A_1(u), \dots, A_m(u))$, it follows that every vertex
$u \in VY_2$ is mapped to a certain inequality of the system  $\SLI[Y_1]$, denoted $\inq_V(u)$. Thus we obtain a function
$$
\inq_V : V Y_2 \to  \SLI[Y_1]
$$
from the set $V Y_2$ of vertices  of a graph $Y_2$, with the property that the map
$$
\al_2 :  \core(Y_1  \times  Y_2) \to  Y_2
$$  is surjective,
to the set of inequalities of the system $\SLI[Y_1]$.
\smallskip

If $q$ is an inequality of the system $\SLI[Y_1]$, written $q \in \SLI[Y_1]$, we let $q^L$ denote the left hand side of $q$,   let $q^R$ denote the number of the right hand side of the inequality $q$, and  let $k(q) \ge 2$ denote the parameter $k$ for $q$,  see the definition of inequalities \eqref{a3}--\eqref{a4}.
\smallskip

\begin{lemma}\label{lem1}    Suppose $Y_2$ is a finite reduced $U_m$-graph with the property that
the map   $\al_2 : \core(Y_1  \times  Y_2) \to Y_2$ is surjective. Then
 \begin{equation*}
\sum_{u \in VY_2}  \inq_V(u)^L = -2 \brr (Y_2) x_s    \quad  \mbox{and}  \quad
\sum_{u \in VY_2}  \inq_V(u)^R = -2 \brr ( \core(Y_1  \times  Y_2)  )  .
\end{equation*}
\end{lemma}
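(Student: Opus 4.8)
The plan is to compute each of the two sums separately by summing over all vertices $u \in VY_2$, split according to whether $u$ is a $1$-vertex or a $2$-vertex, and to exploit the explicit shape of the inequalities \eqref{a3}--\eqref{a4} together with the definition of the tuples $(A_1(u),\dots,A_m(u))$.

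First I would handle the left-hand sides. For a $1$-vertex $u$, the inequality $\inq_V(u)$ has the form \eqref{a3}, whose left-hand side is $-\sum_{\ell} x_{\,j_\ell, A_{j_\ell}(u)} - x_s$; for a $2$-vertex $u$ it has the form \eqref{a4}, whose left-hand side is $+\sum_{\ell} x_{\,j_\ell, A_{j_\ell}(u)} - x_s$. Thus every vertex of $Y_2$ contributes exactly one term $\mp x_s$, giving the total $-|VY_2|\,x_s$ from the special variable — and I will need to reconcile this with the claimed coefficient $-2\brr(Y_2)$, so in fact the key point is that all the $x_{j,B}$ variables cancel: the contributions from $1$-vertices (with a $-$ sign) must exactly cancel the contributions from $2$-vertices (with a $+$ sign). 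Here I would argue that each edge $e \in EY_2$, say with $e_- = u$ a $1$-vertex and $e_+ = v$ a $2$-vertex and $\ph(e) = a_j$, produces in $\inq_V(u)$ the variable $x_{j, A_j(u)}$ and in $\inq_V(v)$ the variable $x_{j, A_j(v)}$; but by the definitions $A_j(u) = \al_1\al_2^{-1}(e) = A_j(v)$ (both equal the $\al_1$-image of the full $\al_2$-preimage of the \emph{same} oriented edge $e$), so these two occurrences are the same variable with opposite signs and cancel. Wait — this would give $\sum \inq_V(u)^L = -|VY_2| x_s$, not $-2\brr(Y_2) x_s$. The resolution must be that the statement is using $\deg$-weighting implicitly, or that I have mis-parsed which vertices index which inequalities; more carefully, I should recount using the identity $\sum_{u\in VY_2}(\deg u - 2) = 2\brr(Y_2)$ and check that in fact only vertices of degree $>2$ (equivalently, only vertices whose tuple is genuinely $i$-admissible, forcing $|[e]_{\sim_i}|>1$) contribute an $x_s$, OR that each such vertex contributes $(\deg u - 2)$ copies of $x_s$ — this bookkeeping is the first place to be careful.

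Second, for the right-hand sides: $\inq_V(u)^R = -N_i(A_1(u),\dots,A_m(u))$ where $i$ is the type of $u$. By \eqref{a2}, $N_i(A_1(u),\dots,A_m(u)) = \sum_{[e]_{\sim_i}}(|[e]_{\sim_i}|-2)$. The plan is to sum this over all $i$-vertices $u$ for $i=1$ and $i=2$ and identify the result with $2\brr(\core(Y_1\times Y_2))$ via the identity $\brr(W) = \tfrac12\sum_{w\in VW}(\deg w - 2)$ for $W = \core(Y_1\times Y_2)$ recorded in Section 3. The connection is that $\al_1\al_2^{-1}(e_\ell)$ for the edges $e_\ell$ at a vertex $u$ of $Y_2$ records exactly the fibers of $\al_2$ over $u$, and the equivalence classes $[e]_{\sim_i}$ on $\cup_j A_j(u)$ are in bijection with the vertices $w \in VW$ with $\al_2(w) = u$, with $|[e]_{\sim_i}|$ equal to (one part of) $\deg_W w$. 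Summing $|[e]_{\sim_i}|-2$ over all classes and all $u$ should therefore reassemble $\sum_{w\in VW}(\deg_W w - 2) = 2\brr(W)$, using $\deg_W w = \deg_1 w + \deg_2 w$ split by the two end-types and noting that surjectivity of $\al_2$ ensures every $u$ is hit. This is the technical heart: I must verify the bijection between $\sim_i$-classes at $u$ and $W$-vertices over $u$, and that the cardinalities match up correctly (in particular that the "$-2$" in $N_i$ is precisely the "$-2$" in the degree formula for each such $W$-vertex, split evenly between $i=1$ and $i=2$).

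The main obstacle I expect is the left-hand-side cancellation bookkeeping: getting the coefficient of $x_s$ to come out as exactly $-2\brr(Y_2)$ rather than $-|VY_2|$ or something else, which means the $x_s$-count cannot simply be "one per vertex" — I anticipate that either the inequalities are set up so that only degree-$>2$ vertices yield inequalities, or that the function $\inq_V$ as defined already encodes a multiplicity, and reconciling the two displayed identities will pin down which reading is correct. Once the correspondence "edges of $Y_2$ at $u$" $\leftrightarrow$ "$A_j(u)$ via $\al_1\al_2^{-1}$" $\leftrightarrow$ "vertices of $W$ over $u$" is set up cleanly, the right-hand side is a direct double-counting argument, and the left-hand side cancellation of the $x_{j,B}$ variables follows from $A_j(u)=A_j(v)$ along the shared edge $e$ with the opposite signs in \eqref{a3} versus \eqref{a4}.
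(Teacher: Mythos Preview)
Your approach is exactly the paper's approach: cancel the $x_{j,B}$-variables edge-by-edge (each edge $e$ of $Y_2$ with $\ph(e)=a_j$ contributes $-x_{j,B}$ at its $1$-end and $+x_{j,B}$ at its $2$-end, with the same $B=\al_1\al_2^{-1}(e)$), then read off the coefficient of $x_s$; and for the right-hand side use the bijection between $\sim_i$-classes at $u$ and vertices of $W=\core(Y_1\times Y_2)$ over $u$, with class-size equal to degree.

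Your confusion about the $x_s$-coefficient is well-founded and you correctly guessed the resolution. The displayed inequalities \eqref{a3}--\eqref{a4} as written carry a single $-x_s$, but the paper's own proof states that a vertex of degree $d$ contributes $-(d-2)$ to the coefficient of $x_s$, and the feasibility check in Lemma~\ref{lem6} evaluates the left-hand side at $\hat x_s=2\brr(Y_1)$ as $-(k-2)\cdot 2\brr(Y_1)$. Both are consistent only if the coefficient of $x_s$ in \eqref{a3}--\eqref{a4} is $-(k-2)$, where $k$ is the number of nonempty $A_j$'s (and $k=\deg u$ since $Y_2$ is reduced). With that reading, $\sum_u \inq_V(u)^L = -\sum_u(\deg u -2)\,x_s = -2\brr(Y_2)\,x_s$, exactly as the paper computes.

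For the right-hand side, your plan is correct but simpler than you fear: since $U_m$ is bipartite, every vertex $w\in VW$ is either a $1$-vertex or a $2$-vertex and lies over a unique $u\in VY_2$ of the same type via $\al_2$. The $\sim_i$-classes on $\cup_j A_j(u)$ are in bijection with the $w=(v,u)\in VW$ over $u$ (the class indexed by $v$ consists of those $f\in\cup_j A_j(u)$ with $f_-=v$ when $i=1$, resp.\ $f_+=v$ when $i=2$), and $|[f]_{\sim_i}|=\deg_W w$. Hence $\sum_{u}\inq_V(u)^R=-\sum_{w\in VW}(\deg_W w-2)=-2\brr(W)$. There is no need to ``split the $-2$ evenly between $i=1$ and $i=2$'': each $w$ appears exactly once, over the unique $u=\al_2(w)$.
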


\begin{proof} Suppose $e \in EY_2$,  $\ph(e) =a_j$,  and $e_- = u_1$, $e_+ = u_2$. Clearly, $u_i$ is an $i$-vertex of $Y_2$, $i=1,2$. Denote $B := \al_1 \al_2^{-1} (e) \subseteq  E_{a_j}Y_1$.  Then the variables $-x_{j, B}$ and $x_{j, B}$ of  $\inq_V(u_1)^L$ and $\inq_V(u_2)^L$, resp., will cancel out in the sum  $\sum_{u \in VY_2}  \inq_V(u)^L$.  It is easy to see that  all occurrences  of the variable $\pm x_{j, B}$, where $j =1, \dots, m$, $B \subseteq E_{a_j}Y_1$, $|B| >0$, in the sum  $\sum_{u \in VY_2}  \inq_V(u)^L$ can be paired down by using edges of $Y_2$  as indicated above.
\smallskip

Now we observe that every vertex $u \in VY_2$ of degree $d \ge 2$ contributes $-(d-2)$ to the coefficient of $x_s$ in the sum $\sum_{u \in VY_2}  \inq_V(u)^L$ and that
$$
-\chi(Y_2) = \brr (Y_2) = \tfrac 12 \sum_{u \in VY_2} (\deg u -2) .
$$

Therefore, we may conclude that
$$
\sum_{u \in VY_2}  \inq_V(u)^L =  -2 \brr (Y_2) x_s ,
$$
as required.
\smallskip

The second equality of the Lemma's statement follows from the analogous equality
$$
-\chi(W) = \brr (W) =  \tfrac 12 \sum_{u \in VW} (\deg u -2)
$$
for $W = \core(Y_1  \times  Y_2)$ and from the definition \eqref{a2}  of the numbers $N_i(A_1, \dots, A_m)$ that are used in the right hand sides of inequalities \eqref{a3}--\eqref{a4}. Indeed, for every $v \in VY_2$,  the term $\inq_V(v)^R$ of the sum
$$
\sum_{u \in VY_2}  \inq_V(u)^R
$$
is $-N(A_1(v), \ldots, A_m(v))$,  here the sets $A_j(v)$ are defined as in \eqref{Adu},   which in turn is equal to
$$
- \sum_{\al_2(w) = v}   (\deg w -2) ,
$$
where the summation runs over all vertices $w \in W = \core(Y_1  \times  Y_2) $  with $\al_2(w) = v$. Since the  map $\al_2$ is surjective, we obtain
$$
\sum_{u \in VY_2}  \inq_V(u)^R = -2 \brr ( \core(Y_1  \times  Y_2) ) ,
$$
as required.
\end{proof}

Let $A$ be a finite set. A {\em combination with repetitions} $B$ of $A$, denoted
$$
B = [[ b_1, \dots, b_\ell ]]  \sqsubseteq  A ,
$$
is a finite unordered collection of multiple copies of elements of $A$. Hence, $b_i \in A$ and
$b_i = b_j$ is possible when $i \ne j$. If $B = [[ b_1, \dots, b_\ell ]]$ is a combination with repetitions
then the cardinality $|B|$ of  $B$ is $|B| := \ell$.
\smallskip

Observe that the graph $Y_2$ of  Lemma~\ref{lem1}  can be used to construct a combination with repetitions, denoted
$$
\inq(VY_2) ,
$$
of the system   $\SLI[Y_1]$, whose elements are individual inequalities,  so that  every inequality  $q = \inq_V(u)$  of  $\SLI[Y_1]$ occurs in $\inq(VY_2)$ a number of times equal to the number of preimages of $q$ in   $VY_2$ under the map $\inq_V$. It follows from Lemma~\ref{lem1} that if
$$
\inq(VY_2)  = [[ q_1, \dots, q_\ell ]]   \sqsubseteq  \SLI[Y_1]
$$
then
$$
\sum_{q \in  \inq(VY_2)}q^L := \sum_{i=1}^{\ell}q_i^L   =  -C x_s \ ,
$$
where $C \ge 0$ is an integer, $C = 2\brr(Y_2)$.
\smallskip

For convenience of references, we introduce the following property of a graph $Y_2$ (which need not be connected).

\begin{enumerate}
\item[(B)] \   $Y_2$ is a  finite reduced $U_m$-graph such that the map   $\al_2 : \core(Y_1  \times  Y_2) \to Y_2$ is surjective,  $\core(Y_2) = Y_2$,  and  $\brr(Y_2) = - \chi(Y_2) > 0$.
\end{enumerate}

Note that the equality $\core(Y_2) = Y_2$  could be dropped as it follows from the surjectivity of the map
$\al_2 : \core(Y_1  \times  Y_2) \to Y_2$.

\begin{lemma}\label{lem2}  Suppose $Q$ is a nonempty combination with repetitions of
$\SLI[Y_1]$  and
\begin{equation}\label{b2}
\sum_{q \in  Q}^{}q^L = -C(Q) x_s  ,
\end{equation}
where $C(Q) > 0$ is an integer. Then there exists a finite reduced $U_m$-graph $Y_{2,Q}$ with property (B) such that, letting  $ \wtl Q := \inq(VY_{2,Q})$, one has  $|\wtl Q | = |Q|$ and
\begin{align}\label{b3}
\sum_{q \in  Q}^{}q^L & = \sum_{q \in \wtl Q}^{}q^L =     - 2\brr(Y_{2,Q}) x_s , \\ \label{b3b3}
 \sum_{q \in  Q}^{}q^R & \ge \sum_{q \in \wtl Q}^{}q^R  =  - 2\brr(  \core(Y_1  \times  Y_{2,Q})  )    .
\end{align}
\end{lemma}

\begin{proof} We will construct a graph $Y_{2,Q}$ whose vertices $ u_1, \dots, u_{|Q|} $ are in
bijective correspondence
$$
u_i \mapsto q_i ,  \quad  i =1, \dots, |Q|,
$$
with elements of the combination $Q = [[ q_1, \dots, q_{|Q|} ]] \sqsubseteq   \SLI[Y_1]$.
\smallskip

Recall that every inequality $q_i$ in $Q$ has one of the form \eqref{a3}--\eqref{a4}.
It follows from the assumption \eqref{b2} that all  terms $\pm x_{j, D}$ in the  sum
\begin{align}\label{fsum}
\sum_{i=1}^{|Q|}q_i^L
\end{align}
will cancel out.
Hence, there exists an involution  $\iota$ on the set of all terms $\pm x_{j, D}$   of the formal sum \eqref{fsum}
such that $\iota$ takes  every  term $\pm x_{j, D}$  of $q_{i_1}^L$ to a term $\mp x_{j, D}$ of $q_{i_2}^L$, where $i_2 \ne i_1$,   and $\iota^2 = \mbox{id}$.
\smallskip

If  $\iota$ takes the term  $-x_{j, D}$ of $q_{i_1}^L$ to  the term $ x_{j, D}$ of $q_{i_2}^L$,  then we connect the vertex $u_{i_1}$ to $u_{i_2}$ by an oriented edge in $Y_{2,Q}$ whose label is $a_j$,  $a_j \in \A$. This definition determines the local structure of the graph $Y_{2,Q}$ and, in particular, means that if $q_i \in Q$ has type \eqref{a3}, then $u_i$ is a 1-vertex.
On the other hand, if  $q_i \in Q$ has type \eqref{a4}, then $u_i$ is a 2-vertex. Furthermore, it follows from this definition that the degree of every vertex $u_i$ of  $Y_{2,Q}$  is $k(q_i) \ge 2$. Recall that
$k(q_i)$ denotes the parameter $k$ for $q_i$,  see the definitions \eqref{a3}--\eqref{a4}.
\smallskip

Looking at the coefficients of $-x_s$ in  \eqref{fsum}, we can see from \eqref{b2} and
\eqref{a3}--\eqref{a4} that
$$
C(Q) = \sum_{i = 1}^{|Q|}(k(q_i)-2)  >0 .
$$
Hence, the graph $Y_{2,Q}$ has a vertex of degree at least $3$.
\smallskip

Therefore, $Y_{2,Q}$  is a finite reduced $U_m$-graph  such that
$\core (Y_{2,Q}) = Y_{2,Q}$ and $\brr( Y_{2,Q}) >0$. Note that $Y_{2,Q}$ is not uniquely determined by $Q$ (as there are many choices to define the involution $\iota$, i.e., to do cancellations in the left hand side of \eqref{b2}).
\smallskip

Consider the graph $ \core( Y_1 \times Y_{2,Q})$ and the associated graph maps
$$
\al_1 :  \core( Y_1 \times Y_{2,Q}) \to Y_1 , \quad   \al_2 :   \core( Y_1 \times Y_{2,Q})   \to Y_{2,Q}  .
$$
It is clear from the definitions that $\al_2$ is surjective, hence,  $Y_{2,Q}$ has property (B).
\smallskip

Let $u_i$ be a vertex of $Y_{2,Q}$ and let the inequality $q_i \in Q$, corresponding to $u_i$, is constructed by means of an $\al_i$-admissible tuple $(A_{1, i}, \ldots, A_{m, i})$ so that
\begin{align*}
q_i^L & =  (-1)^{\al_i}   x_{j_{1, i}, A_{j_{1, i}, i} } + \cdots +
 (-1)^{\al_i} x_{j_{k(q_i), i}, A_{j_{k(q_i), i}} } - (k(q_i)-2)x_s  , \\
q_i^R & = - N_{\al_i}(A_{1, i}, \ldots, A_{m, i}) .
\end{align*}
It follows from the definitions that, for every possible term  $\pm x_{j, A_{j,i}}$  of $q_i^L$, where $ A_{j,i} \ne \varnothing$,     the set
$A_j(u_i)$, as defined in \eqref{Adu}, contains $A_{j,i}$ as a subset and, if $q_i^L$ has no term $\pm x_{j, A_{j,i}}$, i.e., $ A_{j,i} = \varnothing$,   then $A_j(u_i) = \varnothing$.  These remarks mean that if $\wtl Q := \inq(V Y_{2,Q})$ then $| \wtl Q | = |  Q |$ and $Y_{2,Q} = Y_{2, \wtl Q }$ for a suitable involution $\wtl \iota = \wtl \iota(\wtl Q)$.
\smallskip

Hence, if $q_i$ has the form \eqref{a3}, i.e., $\al_i = 1$,  then
$$
\inq_V( u_i )^L = - x_{j_{1,i}, A_{j_1}(u_i)} - \ldots - x_{j_{k(q_i), i},  A_{j_{k(q_i)}}(u_i) } - ({k(q_i)} -2)x_s ,
$$
where $A_{j_t,i } \subseteq A_{j_t}(u_i)$ for every $t =1, \ldots, {k(q_i)}$ and
$$
N_1(A_{1, i}, \ldots, A_{m, i} ) \le N_1(A_1(u_i), \ldots, A_m(u_i)) .
$$
\smallskip

Analogously, if $q_i$ has the form \eqref{a4}, i.e., $\al_i = 2$,  then
$$
\inq_V( u_i )^L =  x_{j_{1, i}, A_{j_1}(u_i)} + \ldots + x_{j_{k(q_i), i}, A_{j_{k(q_i)}}(u_i)} - ( {k(q_i)} -2)x_s ,
$$
where $A_{j_t,i} \subseteq A_{j_t}(u_i)$ for every $t =1, \ldots, {k(q_i)}$  and
$$
N_2(A_{1,i},  \ldots, A_{m, i} ) \le N_2(A_1(u_i), \ldots, A_m(u_i)) .
$$
\smallskip

Therefore,
\begin{equation*}
\sum_{q \in  \inq(VY_{2,Q})} q^R = \sum_{q \in \wtl Q} q^R \le   \sum_{q \in  Q} q^R    .
\end{equation*}
\smallskip

Now both the equality  \eqref{b3}  and  inequality  \eqref{b3b3} follow from Lemma~\ref{lem1}.
\end{proof}
\smallskip

We summarize Lemmas~\ref{lem1}--\ref{lem2} as follows.

\begin{lemma}\label{lem3}  The function
\begin{equation*}
\inq:  Y_2  \mapsto \inq(VY_2) = Q
\end{equation*}
from the set of finite reduced $U_m$-graphs
$Y_2$ with property (B) to the set of combinations  with repetitions $Q$ of  $\SLI[Y_1]$ with the property
 $\sum_{q \in Q}^{}q^L = -C(Q) x_s $,
where $C(Q) > 0$ is an integer, is such  that
\begin{equation*}
\sum_{q \in \inq(VY_2) }^{}q^L    =   -2\brr(Y_2) x_s  \quad   \mbox{and}   \quad
\sum_{q \in   \inq(VY_2) }^{}q^R  =   -2\brr(\core(Y_1 \times  Y_2)) .
\end{equation*}

In addition, for every $Q$ in the codomain of the function  $\inq$,  there exists a graph $Y_{2,Q}$ in the domain of $\inq$  such that, letting
  $\wtl Q := \inq(VY_{2,Q})$, one has  $|\wtl Q | = |Q|$ and
\begin{align*}
\sum_{q \in  Q}^{}q^L & = \sum_{q \in \wtl Q}^{}q^L =     - 2\brr(Y_{2,Q}) x_s , \\
 \sum_{q \in  Q}^{}q^R & \ge \sum_{q \in \wtl Q}^{}q^R  =  - 2\brr(  \core(Y_1  \times  Y_{2,Q})  )    .
\end{align*}
\end{lemma}

\begin{proof} This is straightforward  from Lemmas~\ref{lem1}--\ref{lem2}.
\end{proof}

\section{Utilizing the method of linear programming}

Let us briefly review relevant results from the theory of linear programming (LP) over the field $\mathbb Q$ of rational numbers.
Following the notation of Schrijver's monograph \cite{S86}, let $A \in \mathbb Q^{m'\times n'}$ be an $m'\times n'$-matrix, let $b \in \mathbb Q^{m'\times 1} = \mathbb Q^{m'} $ be a column vector,  let $c \in \mathbb Q^{1\times n'} $ be a row vector, $c= (c_1, \dots, c_{n'})$,
and let $x$ be a column vector consisting of variables $x_1, \dots, x_{n'}$, so $x = (x_1, \dots, x_{n'})^{\top}$, where $M^{\top}$ means the transpose of a matrix $M$. The inequality $x \ge 0$ means that $x_i \ge 0$ for every $i$.
\smallskip

A typical LP-problem asks about the maximal value of the objective linear function
$$
cx = c_1 x_1 \dots +c_{n'}x_{n'}
$$
over all $x \in \mathbb Q^{n'}$ subject to the system of linear inequalities $Ax \le b$.
This value (and often the LP-problem itself) is denoted
$$
\max\{ cx \mid Ax \le b  \} .
$$
\smallskip

We write $\max\{ cx \mid Ax \le b    \} = -\infty$ if the set $\{ cx \mid Ax \le b    \}$ is empty. We write $\max\{ cx \mid Ax \le b    \} = +\infty$ if the set $\{ cx \mid Ax \le b    \}$ is unbounded from above and say that
 $\max\{ cx \mid Ax \le b   \} $ is finite if the set $\{ cx \mid Ax \le b    \}$ is nonempty and bounded from above. The notation and terminology for an LP-problem
 $$
 \min\{ cx \mid Ax \le b    \} = -  \max\{ -cx \mid Ax \le b    \}
 $$
 is analogous with
 $-\infty$ and $+\infty$ interchanged.
\smallskip

If $\max\{ cx \mid Ax \le b   \} $ is an LP-problem defined as above, then the problem
 $$
 \min\{ b^{\top}y \mid A^{\top}y = c^{\top}, y\ge 0  \} ,
 $$
where  $y = (y_1, \dots, y_{m'})^{\top}$, is called the {\em dual} problem of  $\max\{ cx \mid Ax \le b   \}$.
\smallskip

 The (weak) duality theorem of linear programming can now be stated as follows, see \cite[Sections 7.4, 14.3]{S86}.

 \begin{TA} Let  $\max\{ cx \mid Ax \le b   \} $ be an LP-problem and let
 $$
 \min\{ b^{\top}y \mid A^{\top}y = c^{\top}, y\ge 0  \}
 $$
 be its dual LP-problem.
 Then  for every $x \in  \mathbb Q^{n'}$ such that $ Ax \le b$  and every $y \in  \mathbb Q^{m'}$ such that $A^{\top}y = c^{\top}, y\ge 0$,
 one has that  $c x = y^{\top} Ax \le  b^{\top} y$ and
 \begin{equation}\label{dt}
\max\{ cx \mid Ax \le b    \}  =  \min\{ b^{\top}y \mid A^{\top}y = c^{\top}, y\ge 0  \}
\end{equation}
provided both polyhedra  $\{ x \mid Ax \le b \}$ and  $\{  y \mid A^{\top}y = c^{\top}, y\ge 0  \}$ are not empty.  In addition,
the minimum, whenever it is finite,  is attained at a vector $ y_V$   which is a vertex of
the polyhedron $\{ y  \mid A^{\top}y = c^{\top}, y\ge 0 \}$.
\end{TA}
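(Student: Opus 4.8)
The plan is to establish the three assertions of Theorem~A in order: first the inequality chain $cx \le y^{\top}Ax \le b^{\top}y$, then the duality equality \eqref{dt}, and finally attainment of the dual minimum at a vertex. \emph{The weak inequality.} Suppose $Ax \le b$ and $A^{\top}y = c^{\top}$, $y \ge 0$. Transposing the equation $A^{\top}y = c^{\top}$ gives $y^{\top}A = c$, so $cx = y^{\top}(Ax)$, which is in fact an equality. For the second inequality, multiply the $i$-th scalar inequality of $Ax \le b$ by $y_i \ge 0$ and sum over $i$ to get $y^{\top}(Ax) \le y^{\top}b = b^{\top}y$. In particular, if both polyhedra $\{x \mid Ax \le b\}$ and $\{y \mid A^{\top}y = c^{\top},\, y \ge 0\}$ are nonempty, then $\max\{cx \mid Ax\le b\} \le \min\{b^{\top}y \mid A^{\top}y=c^{\top},\, y\ge 0\}$, and both sides are finite.

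\emph{The duality equality.} Assume both polyhedra are nonempty and put $z^{*} := \max\{cx \mid Ax \le b\}$, which is finite by the previous step. By definition of $z^{*}$, every $x$ with $Ax \le b$ satisfies $cx \le z^{*}$; since the system $Ax \le b$ is feasible, the affine form of Farkas's lemma (see \cite[Sect.~7.1]{S86}) produces a vector $y$ with $y \ge 0$, $y^{\top}A = c$, and $y^{\top}b \le z^{*}$, that is, $A^{\top}y = c^{\top}$, $y \ge 0$, $b^{\top}y \le z^{*}$. Combined with the weak inequality $b^{\top}y \ge z^{*}$, this forces $b^{\top}y = z^{*}$; hence $y$ is dual-feasible of value $z^{*}$, and since no dual-feasible vector has value below $z^{*}$, it is optimal and \eqref{dt} holds. (If a self-contained argument is wanted, Farkas's lemma here follows from Fourier--Motzkin elimination applied to $Ax \le b$; that derivation also keeps all data rational and gives effective bounds on the coefficients, which is exactly what is needed later for the size estimate on $H_2^{*}$.)

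\emph{Attainment at a vertex.} The preceding step shows the dual minimum is finite and attained; let $G$ be the set of optimal dual solutions. On the polyhedron $P^{*} := \{y \mid A^{\top}y = c^{\top},\, y \ge 0\}$ one has $b^{\top}y \ge z^{*}$ everywhere (apply weak duality with any primal-feasible $x$ and take the supremum over $x$), so $G = P^{*} \cap \{y \mid b^{\top}y \le z^{*}\}$ is a nonempty face of $P^{*}$. Because $P^{*}$ is contained in the nonnegative orthant it contains no line, hence neither does $G$; a nonempty line-free polyhedron has a vertex (a minimal nonempty face, which is then a point), and a vertex of the face $G$ is again a vertex of $P^{*}$, a face of a face being a face. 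Any such vertex $\hat y$ is the asserted vector.

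\emph{Main obstacle.} The one substantial ingredient is the affine Farkas lemma, equivalently strong LP duality, invoked in the second step; the weak inequality and the vertex argument are routine linear algebra over $\mathbb{Q}$ together with elementary polyhedral theory. Accordingly I would quote Farkas's lemma / the duality theorem from \cite{S86}, noting only that its Fourier--Motzkin proof is what guarantees rationality and a controlled size of the dual vertex solution.
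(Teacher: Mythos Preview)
Your argument is correct and is the standard textbook route: weak duality by direct computation, strong duality via the affine Farkas lemma, and vertex attainment from the fact that $P^{*}$, lying in the nonnegative orthant, is line-free. One small remark: in the last step you need not invoke $z^{*}$ or primal feasibility at all; once the dual minimum is finite, the minimizing set is automatically a nonempty face of the line-free polyhedron $P^{*}$ and therefore has a vertex. (Your version is fine too, since a finite dual optimum forces primal feasibility by strong duality, but the simpler phrasing avoids the apparent circularity.)

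As for comparison with the paper: there is nothing to compare. The paper does not prove Theorem~A; it merely quotes it from Schrijver \cite[Sect.~7.4]{S86} as background on linear programming. So your write-up is not an alternative proof but a proof where the paper gives none. If you intend to include it, you could shorten the middle step to a one-line citation of Farkas's lemma from \cite{S86}, since that is exactly how the paper treats this material.
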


We now consider the problem of maximizing the objective linear function
$$
cx :=  -x_s
$$
over all rational vectors $x$, $x \in \mathbb Q^{n'}$, for a suitable $n'$, subject to the system of  linear inequalities $\SLI[Y_1]$, as an LP-problem $\max \{ cx \mid Ax \le b  \}$.
Note that, in this context, $m' = m_{\inq}$ and $n' = n_{\inq}$, where $m_{\inq}$ is the number of inequalities in $\SLI[Y_1]$
and $n_{\inq}$ is the number of all variables $x_{j, B}, x_s$ in $\SLI[Y_1]$.
\smallskip

It is straightforward to verify that the dual problem
$$
\min \{ b^{\top} y  \mid  A^{\top} y = c^{\top}, y\ge 0 \}
$$
of this  LP-problem  $\max \{ cx \mid Ax \le b  \}$  can be equivalently stated as follows
\begin{equation}\label{dlp0}
\sum_{i=1}^{m_{\inq} } y_i q_i^R \to \min  \quad  \mbox{subject to} \quad   y \ge 0 , \
\sum_{i=1}^{m_{\inq}} y_i q_i^L = - x_s .
\end{equation}
We emphasize that the last equality should be thought of as a formal linear combination on variables
$x_{j, B}, x_s$.
Note we can  write \eqref{dlp0} in the form
\begin{equation}\label{dlp}
\min \bigg\{ \sum_{i=1}^{m_{\inq}} y_i q_i^R \  \Big| \   y \ge 0 ,    \sum_{i=1}^{m_{\inq}} y_i q_i^L = - x_s  \bigg\} .
\end{equation}
\smallskip

In Lemma~\ref{lem3},  we established the existence of a function
$$
\inq :   Y_2 \mapsto  \inq(Y_2) ,
$$
from the set of finite reduced $U_m$-graphs $Y_2$ with property (B) to a certain set of combinations with  repetitions of $\SLI[Y_1]$.
Now we will relate these combinations
with repetitions of $\SLI[Y_1]$ to solutions of the dual LP-problem \eqref{dlp}.
\smallskip

Consider a  combination $Q$  with repetitions of $\SLI[Y_1]$ that has the property that
 \begin{equation}\label{st1}
\sum_{q \in Q} q^L = -C(Q) x_s     ,
\end{equation}
where $C(Q) >0$ is an integer.
As above in  \eqref{dlp0}--\eqref{dlp},  let  all of the inequalities in $\SLI [Y_1]$ be indexed and
$$
\SLI [Y_1] = \{ q_1, \dots, q_{m_{\inq} } \} .
$$
Let  ${\eta}_i(Q) \ge 0$ denote the number of times that $q_i$ occurs in $Q$, and let $\kappa_i$ be the coefficient of $x_s$ in $q_i$. Then it follows from the definitions and \eqref{st1} that
\begin{equation}\label{st2}
 \sum_{q \in Q} q^L =  \sum_{i=1 }^{m_{\inq}}  \kappa_i {\eta}_i(Q) x_s =  - C(Q) x_s    .
\end{equation}
\smallskip

Consider the map
\begin{equation}\label{dfyy}
\sol :  Q \mapsto y_Q = ( y_{Q,1}, \dots,  y_{Q, m_{\inq}} )^{\top}   ,
\end{equation}
where
$y_{Q,i} :=  \frac{{\eta}_i(Q)}{C(Q)}$ for $i =1, \dots, m_{\inq}$.
It follows from the definitions that $y_Q$ is a rational vector, $y_Q \ge 0$, and, by \eqref{st2}, $y_Q$ satisfies the condition that
$$
\sum_{i=1}^{m_{\inq}} y_{{Q,i}} q_i^L  = - x_s    .
$$
Hence, $y_Q$ is a vector in the feasible  polyhedron
\begin{equation}\label{fpdp}
\bigg\{ y \  \Big| \   y \ge 0, \sum_{i=1}^{m_{\inq}} y_{i} q_i^L   = - x_s \bigg\}
\end{equation}
of the dual  LP-problem \eqref{dlp}.
\smallskip

Note that, in place of \eqref{dfyy},  we could also write
\begin{equation}\label{solQ}
\sol :  Q \mapsto  C(Q)^{-1}  \eta(Q)^{\top} ,
\end{equation}
where  $\eta(Q) = (  \eta_1(Q) , \dots, \eta_{m_{\inq}}(Q)   )$, as  $y_Q =   C(Q)^{-1}  \eta(Q)^{\top}$.

\smallskip

Conversely, let $z = (z_1, \dots, z_{m_{\inq}})^{\top}$ be a vector of the feasible polyhedron
\eqref{fpdp}
of the dual  LP-problem \eqref{dlp}. Let $C >0 $ be a common multiple of positive denominators
of the rational numbers
$z_1, \dots, z_{m_{\inq}}$. Consider a combination with repetitions
$Q(z)$ of  $\SLI[Y_1]$ such that every $q_i$ of  $\SLI[Y_1]$ occurs
in $Q(z)$ exactly  $C z_{i} = {n}_i$ many times. Then it follows from the definitions that
\begin{equation}\label{c4}
\sum_{q \in Q(z)} q^L =   \sum_{i=1}^{m_{\inq}} {n}_i q_i^L = \sum_{i=1}^{m_{\inq}} C z_{i}  q_i^L  =
 C \sum_{i=1}^{m_{\inq}} z_{i}  q_i^L  = - C x_s  .
\end{equation}
Now we can see from
\begin{equation}\label{c4x}
\frac {\eta_i(Q(z)) }{C} = \frac {C z_i}{C} = z_i ,
\end{equation}
where $i = 1, \dots, m_{\inq}$, that the vector $y_{Q(z)} = \sol(Q(z))$,  defined by \eqref{dfyy} for $Q(z)$,
is equal to $z$.

\begin{lemma}\label{lem4}
The map
$$
\sol :  Q \mapsto \sol(Q) = y_Q ,
$$
defined by \eqref{dfyy}, is a
surjective function from the set of combinations $Q$ with repetitions of $\SLI[Y_1]$ that
satisfy the equation $\sum_{q \in Q} q^L = -C(Q) x_s$, where $ C(Q) >0$ is an integer, to the  feasible polyhedron   \eqref{fpdp}    of the dual  LP-problem
\eqref{dlp}. Furthermore, the composition of the maps
$\inq$ and $\sol$,
$$
\sol  \circ \inq :  Y_2 \mapsto  \sol( \inq(Y_2) )  = y_{Y_2} ,
$$
is a function from the set of graphs with property (B) to the  polyhedron   \eqref{fpdp}    of  \eqref{dlp}.
 Under this map, the value of the objective function
$
\sum_{i=1}^{m_{\inq}}   y_{Y_2, i}  q_i^R
$
of the  dual  LP-problem  \eqref{dlp} at $y_{Y_2}$ satisfies the equality
\begin{equation}\label{ps4}
\sum_{i=1 }^{m_{\inq}}  y_{Y_2, i}    q_i^R   =  -\frac{\brr ( \core(Y_1  \times  Y_2) )}{\brr (Y_2) }   .
\end{equation}
\smallskip

In addition, for every $z $ in the  polyhedron     \eqref{fpdp}, there is a vector
$\wtl z$   in   \eqref{fpdp}  such that $\wtl z = \sol( \inq(Y_2))$ for some graph $Y_2$ with property (B) and
\begin{equation}\label{ps44}
\sum_{i=1 }^{m_{\inq}}    \wtl z_{i}  q_i^R    \le \sum_{i=1 }^{m_{\inq}}  z_{i}  q_i^R  .
\end{equation}
\end{lemma}

\begin{proof} As was established above, see computations   \eqref{c4}--\eqref{c4x},  $\sol$ is a surjective function.
\smallskip

Consider a finite irreducible $U_m$-graph  $Y_2$ with property (B) and define
$$
Q := \inq(Y_2) , \quad  y_{Y_2} := \sol(Q) .
$$

By  Lemma~\ref{lem3}, we have
\begin{equation}\label{ps5}
\sum_{q \in Q} q^L = - 2\brr (Y_2) x_s  \quad \mbox{ and} \quad  \sum_{q \in Q} q^R = - 2\brr (\core(Y_1 \times  Y_2) ) .
\end{equation}

It follows from
\eqref{st2} and \eqref{ps5} that  $C(Q ) = 2\brr(Y_2)$. Hence,  using  the  definition  \eqref{dfyy} and equalities \eqref{ps5}, we obtain
\begin{equation*}
\sum_{i=1}^{m_{\inq}}  y_{Y_2, i}   q_i^R = \frac { \sum_{q \in Q } q^R }
{ C(Q) } = -\frac {\brr( \core(Y_1  \times  Y_2) ) }{ \brr(Y_2 ) }   ,
 \end{equation*}
as required in \eqref{ps4}.
\smallskip

To prove the additional statement, consider a vector $z$  in the polyhedron  \eqref{fpdp}.
Since  $\sol$ is  surjective, there is a combination with repetitions $Q$ such that $\sol(Q) = z$.
By Lemma~\ref{lem3} for this $Q$,
there is a graph $Y_{2,Q}$ such that, letting $\inq(VY_{2,Q}) = \wtl Q$, we have that
$|\wtl Q | = |Q|$ and
\begin{align}\label{ABA1}
\sum_{q \in  Q}^{}q^L & = \sum_{q \in \wtl Q}^{}q^L =     - 2\brr(Y_{2,Q}) x_s = - C(Q)x_s = - C(\wtl  Q)x_s  ,  \\ \label{ABA2}
\sum_{q \in  Q}^{}q^R & \ge \sum_{q \in \wtl Q}^{}q^R  =  - 2\brr(  \core(Y_1  \times  Y_{2,Q})  )  .
\end{align}
\smallskip

Let $\wtl z := \sol( \wtl Q)$. Then, in view of \eqref{ABA1}--\eqref{ABA2}, we obtain
\begin{equation*}
\sum_{i=1}^{m_{\inq}}  \wtl z_i q_i^R = \frac { \sum_{q \in \wtl Q } q^R }
{ C(\wtl Q) } \le \frac { \sum_{q \in  Q } q^R }
{ C(Q) } =   \sum_{i=1}^{m_{\inq}}   z_i q_i^R   ,
\end{equation*}
as required.
\end{proof}

We will say that a real nonnegative number $\sigma(Y_1)$ is the WN{\em-coefficient} for the graph $Y_1$ if
\begin{equation*}
\brr( \core(Y_1  \times  Y_2)   ) \le \sigma(Y_1) \brr(Y_1) \brr(Y_2)
\end{equation*}
for every finite reduced $U_m$-graph $Y_2$ and  $\sigma(Y_1)$ is minimal with this property.

\begin{lemma}\label{lem5}  The WN-coefficient  $\sigma(Y_1)$ for the graph $Y_1$ is equal to
\begin{equation}\label{sup}
\sup_{Y_2} \bigg\{  \frac {\brr( \core(Y_1  \times  Y_2)  )}{\brr(Y_1) \brr(Y_2)}  \bigg\}
\end{equation}
over all graphs $Y_2$ with property~(B).
\end{lemma}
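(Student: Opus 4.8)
Let $s$ denote the supremum in~\eqref{sup} taken over all finite reduced $U_m$-graphs $Y_2$ with property~(B). The plan is to prove $s \le \sigma(Y_1)$ and $\sigma(Y_1) \le s$ separately. The first of these is immediate: for every $Y_2$ with property~(B) we have $\brr(Y_2) > 0$, so the defining inequality $\brr(\core(Y_1 \times Y_2)) \le \sigma(Y_1)\brr(Y_1)\brr(Y_2)$ may be divided by $\brr(Y_1)\brr(Y_2) > 0$, and passing to the supremum over all such $Y_2$ yields $s \le \sigma(Y_1)$.

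For the reverse inequality, since $\sigma(Y_1)$ is by definition the least nonnegative constant $\sigma$ with $\brr(\core(Y_1 \times Y_2)) \le \sigma\,\brr(Y_1)\brr(Y_2)$ for \emph{every} finite reduced $U_m$-graph $Y_2$, it suffices to show that $s$ is such a constant. The defining inequality is nontrivial only when $\brr(Y_2) > 0$: if $\brr(Y_2) = 0$, then $\core(Y_2)$ is a disjoint union of loops, every component of $\core(Y_1 \times Y_2)$ is carried by the immersion $\al_2$ into one such loop and so has rank at most $1$, whence $\brr(\core(Y_1 \times Y_2)) = 0$. So fix a finite reduced $U_m$-graph $Y_2$ with $\brr(Y_2) > 0$, and also assume $\brr(\core(Y_1 \times Y_2)) > 0$ (otherwise the inequality is trivial). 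Since the reduced rank is a homotopy invariant and every cyclically reduced closed path of $Y_1 \times Y_2$ projects via $\al_2$ into $\core(Y_2)$, one has $\core(Y_1 \times \core(Y_2)) = \core(Y_1 \times Y_2)$ and $\brr(\core(Y_2)) = \brr(Y_2)$, so we may replace $Y_2$ by $\core(Y_2)$ and assume $Y_2 = \core(Y_2)$.

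The heart of the argument is to pass to the subgraph $Y_2^{*} := \al_2\bigl(\core(Y_1 \times Y_2)\bigr)$ of $Y_2$ and to verify that it has property~(B) while the ratio in~\eqref{sup} does not decrease. Using that the projection of the pullback onto $Y_2$, restricted to the core, is an immersion and hence carries cyclically reduced closed paths to cyclically reduced closed paths, every edge of $Y_2^{*}$ lies on the image of such a path, so $\core(Y_2^{*}) = Y_2^{*}$. From $Y_2^{*} \subseteq Y_2$ one gets $\core(Y_1 \times Y_2^{*}) \subseteq \core(Y_1 \times Y_2)$, while, by the very definition of $Y_2^{*}$, the graph $\core(Y_1 \times Y_2)$ lies in the pullback $Y_1 \times Y_2^{*}$, hence in its core; thus $\core(Y_1 \times Y_2^{*}) = \core(Y_1 \times Y_2)$, and $\al_2 : \core(Y_1 \times Y_2^{*}) \to Y_2^{*}$ is onto by construction. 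Since $\brr(\core(Y_1 \times Y_2)) > 0$, some component of this core has rank $\ge 2$, and its image under the immersion $\al_2$ spans a subgraph of $Y_2^{*}$ of rank $\ge 2$; hence $\brr(Y_2^{*}) > 0$ and $Y_2^{*}$ has property~(B). A short count of vertices and edges, using that $Y_2$ and $Y_2^{*}$ are cores and that each component of $Y_2$ is connected, then gives $\brr(Y_2^{*}) \le \brr(Y_2)$. Combining,
$$\frac{\brr(\core(Y_1 \times Y_2))}{\brr(Y_1)\brr(Y_2)} = \frac{\brr(\core(Y_1 \times Y_2^{*}))}{\brr(Y_1)\brr(Y_2)} \le \frac{\brr(\core(Y_1 \times Y_2^{*}))}{\brr(Y_1)\brr(Y_2^{*})} \le s,$$
which is the required inequality, completing the proof.

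The step I expect to be the main obstacle is this last reduction to a graph with property~(B): one must take care that $Y_2^{*}$ genuinely satisfies property~(B) --- in particular that $\brr(Y_2^{*})$ is \emph{strictly} positive, so that $Y_2^{*}$ is an admissible competitor for the supremum $s$ --- and that the Euler-characteristic inequality $\brr(Y_2^{*}) \le \brr(Y_2)$ for a core subgraph of a core graph holds as claimed. The remaining manipulations with cores and with the definition of $\sigma(Y_1)$ are routine.
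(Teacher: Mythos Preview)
Your proof is correct and follows essentially the same route as the paper: the key step in both is to replace an arbitrary $Y_2$ by the image $Y_2^{*}=\al_2(\core(Y_1\times Y_2))$, observe that this subgraph has property~(B), and use $\brr(Y_2^{*})\le\brr(Y_2)$ so that the ratio can only go up. Your version is more carefully fleshed out (you explicitly verify $\core(Y_2^{*})=Y_2^{*}$, $\brr(Y_2^{*})>0$, and the core-subgraph inequality), whereas the paper compresses all of this into a single sentence; the underlying idea is identical.
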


\begin{proof} Since $\brr(Y_1) >0$ and $\core(Y_1) = Y_1$, we may use $Y_2 = Y_1$ to see that
$$
\sigma(Y_1) = \sup_{Y_2} \frac {\brr( \core(Y_1  \times  Y_2) )}{\brr(Y_1)  \brr(Y_2)}  \ge
 \frac {\brr( \core(Y_1  \times  Y_1) )}{\brr(Y_1)^2}  \ge
 \frac {1}{\brr(Y_1) }    >0
$$
over all finite reduced  $U_m$-graph $Y_2$ such that $\brr(Y_2) >0$. We may also assume that
$Y_2$ has no vertices of degree less than 2, i.e., $\core(Y_2) = Y_2$.
\smallskip

Suppose that a graph $Y_2$ is such that  $\brr(Y_2) >0$, $\core(Y_2) = Y_2$
and $Y_2$ does not satisfy property (B). This means that the projection
$$
\al_2 :  \core(Y_1  \times  Y_2)   \to Y_2
$$
is not surjective. We delete
those edges and vertices in $Y_2$ that have no preimages in $\core(Y_1  \times  Y_2)$ under $\al_2$. As a result, we obtain a subgraph
$Y_2'$ of $Y_2$ such that $\core(Y_1  \times  Y'_2) = \core(Y_1   \times  Y_2)$ and
$0 <  \brr(Y'_2)< \brr(Y_2)$.  Since
$$
\frac{\brr( \core(Y_1  \times  Y'_2) )}{\brr(Y_1)  \brr(Y'_2)  }  >
\frac{\brr( \core(Y_1  \times  Y_2) ) }{\brr(Y_1)  \brr(Y_2) } ,
$$
it follows  that the graphs $Y_2$ that do not satisfy property (B) can be disregarded when taking the
supremum \eqref{sup}.
\end{proof}

\begin{lemma}\label{lem6}
Both optima $\max \{ - x_s \mid \SLI[Y_1]\}$ and
$$
\min \bigg\{  \sum_{j=1}^{m_{\inq}}   y_j q_j^R  \  \Big|  \   y \ge 0 , \
\sum_{j=1}^{m_{\inq}}   y_j q_j^L  = - x_s  \bigg\}
$$
are finite and satisfy the following equalities
\begin{align}
\label{st3}
\begin{split}
 \max \{ - x_s \mid \SLI [Y_1]\} & = \min \bigg\{ \sum_{i=1}^{ m_{\inq} }
 y_i q_i^R   \ \Big| \  y \ge 0 ,  \sum_{i=1}^{ m_{\inq} }   y_i q_i^L  = - x_s \bigg\} \\
  &  =   - \sigma(Y_1) \brr(Y_1)   .
\end{split}
\end{align}
Furthermore, the minimum is attained at a vector $\wtl y_V$ of the polyhedron
\eqref{fpdp} of the dual LP-problem \eqref{dlp} such that
 there is a graph $Y_{2, Q_V}$ with $  \sol( \inq(Y_{2, Q_V})) = \wtl y_V$.

In addition,  $\frac 1{m-2} \le \sigma(Y_1) \le 1$.
\end{lemma}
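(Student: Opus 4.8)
The plan is to obtain the two equalities in \eqref{st3} by combining Lemmas~\ref{lem1}--\ref{lem5} with the linear programming duality theorem (Theorem~A). The primal problem is $\max\{-x_s\mid\SLI[Y_1]\}$ and, rewritten in the form \eqref{dlp}, its dual is exactly the problem in the statement; write $\Pi:=\{\,y\mid y\ge 0,\ \sum_i y_i q_i^L=-x_s\,\}$ for the dual feasible polyhedron. By Lemma~\ref{lem4} the map $\sol\circ\inq$ sends the finite reduced $U_m$-graphs with property (B) onto the points of $\Pi$, and at $(\sol\circ\inq)(Y_2)$ the dual objective equals $-\brr(\core(Y_1\times Y_2))/\brr(Y_2)$ by \eqref{ps4}; by Lemma~\ref{lem5} the supremum of these ratios over all such $Y_2$ equals $\sigma(Y_1)\brr(Y_1)$. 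So, granting that Theorem~A applies, the common optimal value should come out to $-\sigma(Y_1)\brr(Y_1)$.

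First I would check that neither the primal polyhedron $\{x\mid\SLI[Y_1]\}$ nor $\Pi$ is empty. For the primal, the vector with $x_s=2\brr(Y_1)$ and all other coordinates $0$ satisfies every inequality \eqref{a3}--\eqref{a4}, since the right-hand side of each is $-N_i(A_1,\dots,A_m)$ and $N_i(A_1,\dots,A_m)\le 2\brr(Y_1)$ by \eqref{a3a}. For $\Pi$, the graph $Y_2=Y_1$ has property (B) --- it is a finite reduced $U_m$-graph with $\core(Y_1)=Y_1$ and $\brr(Y_1)>0$, and the diagonal copy of $Y_1$ inside $Y_1\times Y_1$ lies in $\core(Y_1\times Y_1)$ and surjects onto $Y_1$ under $\al_2$ --- so $(\sol\circ\inq)(Y_1)\in\Pi$ by Lemma~\ref{lem4}. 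With both polyhedra nonempty, Theorem~A gives $\max\{-x_s\mid\SLI[Y_1]\}=\min\{\sum_i y_i q_i^R\mid y\in\Pi\}$; this common value is finite, since it could equal $\pm\infty$ only if one of the two polyhedra were empty, and the minimum is attained at a vertex $y_V\ge 0$ of $\Pi$.

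Next I would identify that optimal value and settle the bounds. Since $\SLI[Y_1]$ has integer coefficients, $\Pi$ is a rational polyhedron, so the vertex $y_V$ is a rational point of $\Pi$ and hence, by Lemma~\ref{lem4}, $y_V=(\sol\circ\inq)(Y_2^*)$ for some finite reduced $U_m$-graph $Y_2^*$ with property (B). By \eqref{ps4} the common optimal value equals $-\brr(\core(Y_1\times Y_2^*))/\brr(Y_2^*)$; applying \eqref{ps4} to an arbitrary graph $Y_2$ with property (B) and using that $y_V$ is a minimizer gives $\brr(\core(Y_1\times Y_2^*))/\brr(Y_2^*)\ge\brr(\core(Y_1\times Y_2))/\brr(Y_2)$, so by Lemma~\ref{lem5} this ratio equals $\sigma(Y_1)\brr(Y_1)$. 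Thus both optima equal $-\sigma(Y_1)\brr(Y_1)$, proving \eqref{st3}; in passing this shows the supremum of Lemma~\ref{lem5} is attained (at $Y_2^*$), which is needed for Theorem~\ref{th1}(b). For the bounds, the strengthened Hanna Neumann conjecture \eqref{conjs}, via the identification of $\brr(\core(Y_1\times Y_2))$ with a generalized intersection recalled in Section~2, gives $\brr(\core(Y_1\times Y_2))\le\brr(Y_1)\brr(Y_2)$ for every $Y_2$, hence $\sigma(Y_1)\le 1$ (this also re-proves finiteness of the optima, independently of Theorem~A); and taking $Y_2=U_m$, for which the pullback $Y_1\times_{U_m}U_m$ is isomorphic to $Y_1$, so $\core(Y_1\times U_m)=\core(Y_1)=Y_1$ and $\brr(U_m)=|EU_m|/2-|VU_m|=m-2$, the defining inequality $\brr(\core(Y_1\times U_m))\le\sigma(Y_1)\brr(Y_1)\brr(U_m)$ together with $\brr(Y_1)>0$ yields $\sigma(Y_1)\ge\tfrac1{m-2}$.

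The delicate point is the application of Theorem~A: one must confirm that both feasible polyhedra are nonempty and that the optimum, being attained at a rational vertex of $\Pi$, genuinely arises as $(\sol\circ\inq)(Y_2^*)$ for a graph $Y_2^*$ with property (B) --- that is, the correspondence between rational points of $\Pi$ and graphs with property (B) furnished by Lemma~\ref{lem4}. Beyond this I expect no obstacle, since the substantive combinatorics is already contained in Lemmas~\ref{lem1}--\ref{lem5}.
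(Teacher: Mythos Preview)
Your proof is correct and essentially identical to the paper's: nonemptiness of the primal via $\hat x_s = 2\brr(Y_1)$ with all other entries zero, nonemptiness of the dual via $Y_2 = Y_1$, Theorem~A for the equality and the vertex solution, the surjection of Lemma~\ref{lem4} together with Lemma~\ref{lem5} to identify the optimum with $-\sigma(Y_1)\brr(Y_1)$, and the bounds from the strengthened Hanna Neumann conjecture and $Y_2 = U_m$. The only extra wrinkle in the paper's version is that, since the intended coefficient of $x_s$ in \eqref{a3}--\eqref{a4} is $-(k-2)$, the primal-feasibility check separates out the case $k=2$ (where that coefficient vanishes and one needs $N_i=0$, which holds by \eqref{a1}--\eqref{a2}); your phrasing elides this, but the omission is trivial.
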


\begin{proof} Setting $Y_2 := Y_1$, we obtain a graph $Y_2$ with property (B).
Hence, by  Lemma~\ref{lem4}, $y_{Y_2} =  \sol ( \inq  (Y_2))$ is a solution  to the system
$$
y \ge 0 , \quad \sum_{j=1}^{m_{\inq} }   y_j q_j^L  = - x_s ,
$$
so the feasible polyhedron \eqref{fpdp} of the dual LP-problem  \eqref{dlp}  is not empty.
\smallskip

To establish that the polyhedron   $ \{ x \mid \SLI[Y_1]\}$  is not empty either,  we will show that the vector $\wht x$, whose components are
 $\wht x_{j, B} := 0$ for all $j=1,\dots,m$ and for all  $B \subseteq E_{a_j} Y_1$,  and $\wht  x_{s} := 2 \brr(Y_1)$, is a solution to $\SLI[Y_1]$.
 To do this,  we need to check that every inequality \eqref{a3}--\eqref{a4} of $\SLI[Y_1]$ is satisfied with these values of variables, that is,
 \begin{align}\label{dd5}
 -(k-2)\cdot 2 \brr(Y_1) \le -   N_i(A_1, \dots, A_m)
\end{align}
 for every $i$-admissible tuple $(A_1, \dots, A_m)$ in which exactly $k$ sets, among  $A_1, \dots, A_m$,  are nonempty.
\smallskip

It follows from the definition \eqref{a2} of $N_i(A_1, \dots, A_m)$ that $N_i(A_1, \dots, A_m) =0$ if $k=2$. Hence, if $k=2$ then
the inequality \eqref{dd5} is true.  Since $k \ge 2$, we may assume that $k>2$. Then, according to \eqref{a3a},
$N_i(A_1, \dots, A_m) \le  2 \brr(Y_1)$ and the inequality \eqref{dd5} is true again.
\smallskip

Hence, both polyhedra
$$
\{ x \mid \SLI[Y_1]\} ,  \qquad  \bigg\{  y   \  \Big| \   y \ge 0 , \sum_{i=1}^{ m_{\inq} }  y_i q_i^L  = - x_s  \bigg\}
$$
are not empty, as required.
\smallskip

According to Theorem~A, the maximum and minimum in \eqref{st3}  are finite and equal.
Referring to Theorem~A again, we obtain that the minimum  in \eqref{st3}
is attained at a vertex  $y_V$  of the polyhedron \eqref{fpdp}.
\smallskip

It follows from  Lemma~\ref{lem4} that, for the vertex  $y_V$,  there is a vector $\wtl y_V$ in the polyhedron
 \eqref{fpdp}   such that
$$
\sum_{i=1 }^{m_{\inq}}    \wtl y_{V, i}  q_i^R    \le \sum_{i=1 }^{m_{\inq}}  y_{V, i}  q_i^R
$$
and $\wtl y_V =  \sol( \inq(  Y_{2,Q_V}))$ for some graph $Y_{2,Q_V}$ with property (B).  Hence, the minimum in \eqref{st3}
 is also attained at $\wtl y_V$.
\smallskip

By Lemma~\ref{lem4}, for every graph $Y_2$ with property
(B),  the ratio $- \dfrac {\brr(\core(Y_1 \times Y_2) ) }{ \brr(Y_2 ) }$
is equal to
$$
\sum_{i=1}^{ m_{\inq} }  y_{Y_2,i} q_i^R ,
$$
where $y_{Y_2} = \sol(\inq(Y_2))$.
\smallskip

On the other hand, it follows from  Lemma~\ref{lem5} that
\begin{equation*}
\sigma(Y_1 )  \brr(Y_1 ) =  \sup_{Y_2} \bigg\{  \frac {\brr(  \core(Y_1 \times Y_2) ) }{ \brr(Y_2 ) } \bigg\} =
- \inf_{Y_2} \bigg\{  \frac {- \brr(  \core(Y_1 \times Y_2)  ) }{ \brr(Y_2 ) } \bigg\}
\end{equation*}
over all graphs $Y_2$ with property (B).
\smallskip

Therefore, putting together these facts, we obtain
\begin{align*}
 - \sigma(Y_1 )   \brr(Y_1 ) & = \inf_{Y_2} \bigg\{  \frac {- \brr(  \core(Y_1 \times Y_2)  ) }{ \brr(Y_2 ) } \bigg\} \\
  & =  \inf_{Y_2}  \bigg\{  \sum_{i=1}^{  m_{\inq} }   y_i q_i^R  \  \Big|  \    y \ge 0 , \sum_{i=1}^{  m_{\inq} }
 y_i q_i^L = - x_s \bigg\}   \\
  & =  \min \bigg\{  \sum_{i=1}^{ m_{\inq} }   y_i q_i^R     \  \Big|  \        y \ge 0 , \  \sum_{i=1}^{ m_{\inq} }
 y_i q_i^L = - x_s \bigg\}\\ &
  =     \sum_{i=1}^{ m_{\inq} }  \wtl y_{V,i} q_i^R =  \max \{ - x_s \mid \SLI[Y_1]\}   ,
\end{align*}
as desired in \eqref{st3}.
The  equalities  \eqref{st3} are proven.
\smallskip

It remains to show that $\frac 1{m-2} \le \sigma(Y_1) \le 1$.
The inequality $\sigma(Y_1) \le 1$ follows from the fact that the strengthened Hanna Neumann conjecture is true, see
\cite{Fr}, \cite{Mn}, \cite{Dp}, \cite{I15a}.
\smallskip

Let $Y_2 := U_m$. Then
$$
\brr(Y_2 ) = m-2 \quad  \mbox{and} \quad  \brr(  \core(Y_1 \times Y_2)  )  = \brr(Y_1 )
$$
because  $Y_1 \times Y_2 = Y_1$. Hence,
$$
\sigma(Y_1 ) \ge \frac{\brr(  \core(Y_1 \times Y_2)  )  }{\brr(Y_1 ) \brr(Y_2 )} = \frac 1{m-2} ,
$$
as required.
\end{proof}

\begin{lemma}\label{lem7}
There exists a finite reduced $U_m$-graph $Y_{2,Q_V}$ such that
$Y_{2,Q_V}$ has property (B),
$$
\brr(  \core(Y_1 \times Y_{2,Q_V})  )   =   \sigma(Y_1 ) \brr(Y_1 ) \brr(Y_{2,Q_V} ) ,
$$
$Y_{2,Q_V}$ is connected, and
$$
| E Y_{2,Q_V} |  < 2^{2^{  |E Y_1|/2 + 2\log_2m}} .
$$
\end{lemma}

\begin{proof}  According to Lemma~\ref{lem6} and to Theorem~A,  we may assume that
the minimum of the dual LP-problem \eqref{dlp}
is attained at a vertex $y_V$,  $y_V \ge 0$,  of  the feasible polyhedron \eqref{fpdp} of the LP-problem \eqref{dlp}.
\smallskip

Since $y_V$ is a vertex solution of the LP-problem   \eqref{dlp} and \eqref{dlp}  is stated in the form
$\min \{ b^{\top} y \mid A^{\top} y = c^{\top}, y \ge 0 \}$, where $Ax \le b$ is a matrix form  of the system
\eqref{SLI}, it follows that the vertex solution $y_V$ will satisfy $m_{\inq}$  equalities among
$$
A^{\top} y = c^{\top} ,  \quad y_i = 0 ,   \quad  i = 1, \ldots, m_{\inq}  ,
$$
whose left hand side parts  are linearly independent
(as formal linear combinations in variables  $y_1, \ldots, y_{m_{\inq}}$).
We will call these $m_{\inq}$  inequalities  {\em distinguished}.
\smallskip

The foregoing observation implies that there are $r$, $r \le m_{\inq}$, distinguished equalities in the system
$A^{\top} y = c^{\top}$ such that the submatrix $A_r^{\top}$ of $A^{\top}$,
consisting of the rows of $A^{\top}$ that correspond to the $r$ distinguished equalities, has the following property.
The rank of $A_r^{\top}$ is $r$ and deletion of the columns  of $A_r^{\top}$, that correspond to the variables $y_i$ that in turn correspond to
the distinguished equalities $y_i = 0$,  produces an $r\times r$  matrix  $A_{r \times r}^{\top}$ with
$\det A_{r \times r}^{\top} \ne 0$. Reordering the equalities in the system $A^{\top} y = c^{\top}$ and variables $y_i$  if necessary, we may assume that $A_{r }^{\top}$ consists of the first $r$ rows of $A^{\top}$ and
$A_{r \times r}^{\top}$ is an upper left submatrix of $A^{\top}$.
\smallskip

Let
$$
\bar y_V = (y_{V,1}, \dots,  y_{V,r})
$$
be the truncated version of $y_V$ consisting of the first $r$ components. It follows from the definitions that
$\bar y_V$ contains all nonzero components of $y_V$ and
$$
A_{r \times r}^{\top}  \bar y_V  =  \bar c^{\top} = (c_1, \dots,  c_r)^{\top} .
$$

Since $\sum_{i=1}^{m_{\inq}} y_{V,i} q_i^L  = - x_s$, it follows that $c_{j} =0$ if $c_{j}$ corresponds to  a variable $x_{\ell, B}$ and $c_j =-1$ if $c_j$ corresponds to the variable  $x_{s}$. Since $y_V  \ne 0$ following from the definition of the LP-problem \eqref{dlp},
we conclude that $\bar c^{\top} \ne 0$,  i.e.,  one of $c_j$ is $-1$ and all other entries in  $\bar c^{\top}$ are equal to $0$.
Since every entry of $A_{r \times r}$ is 0 or $\pm 1$ or  $-(k-2)$, where $2 \le k \le m$, and  since  every row
of $A_{r \times r} $ contains at most
$m+1$ nonzero entries, at most one of which is different from $\pm 1$, see the definitions \eqref{a3}--\eqref{a4},
it follows that the standard Euclidian norm  of any row of $ A_{r \times r} $ is at most
$$
(m + (m-2)^2)^{1/2} < m
$$
as $m \ge 3$. Hence, by the Hadamard's inequality, we obtain
\begin{equation}\label{cr1}
| \det A_{r \times r} | <    m^{r}  .
\end{equation}

Invoking the Cramer's rule, we further get that
\begin{equation}\label{cr2}
y_{V,i} = \frac{\det A_{r \times r, i}^{\top}(\bar c^{\top})}{  \det A_{r \times r}}   ,
\end{equation}
where  $ A_{r \times r, i}^{\top}(\bar c^{\top})$ is the matrix obtained from $A_{r \times r}^{\top}$ by replacing the $i$th column with $\bar c^{\top}$, $i=1, \ldots, r$.  Similarly to \eqref{cr1}, we have that
\begin{equation}\label{cr3}
| \det A_{r \times r, i}(\bar c^{\top}) | <  m^{r}  .
\end{equation}

In view of \eqref{cr1}--\eqref{cr3},  we can see that there is a common denominator
$C >0$ for the rational numbers $y_{V,1}, \dots,  y_{V,r}$  that satisfies $C < m^{r}$ and that the nonnegative integers
$C y_{V,1}, \ldots,  C y_{V,r}$  are less than $m^{r}$.
\smallskip

It follows from the definition  \eqref{dfyy} of the function $\sol$, see also Lemma~\ref{lem4} and  \eqref{c4}--\eqref{c4x}, that if $Q_V$ is a combination such that $y_{V} = \sol(Q_V)$ and $|Q_V|$ is minimal with this property, i.e.,  the entries of $\eta(Q_V)$  are coprime,  then
\begin{equation}\label{cr3a}
| Q_V | < r m^{r} .
\end{equation}
Recall that the cardinality $| Q |$ of a combination  $Q$ is
defined so that every $q \in Q$ is counted as many times as it occurs in $Q$.

\smallskip

We now construct a graph $Y_{2, Q_V}$ from $Q_V$ as described in the proof of Lemma~\ref{lem2}.
\smallskip

It follows from  the definitions and Lemmas~\ref{lem4},~\ref{lem6}
that if
$$
\wtl y_V :=  \sol(\inq(V Y_{2,Q_V}))
$$
then the minimum of the dual LP-problem \eqref{dlp}
is also attained at $\wtl y_V$ and this minimum is equal to $-\sigma(Y_1) \brr(Y_1)$. Hence,
\begin{equation*}
\brr(\core( Y_1 \times Y_{2,Q_V})) = \sigma(Y_1) \brr(Y_1) \brr(Y_{2,Q_V}) .
\end{equation*}

Since $| V  Y_{2,Q_V} | = |Q_V|$, we have from \eqref{cr3a} that
 \begin{equation}\label{cr4}
| E Y_{2,Q_V} | \le  m | V Y_{2,Q_V} | = m | Q_V |  <  r m^{r+1}  .
\end{equation}

Note that $r$ does not exceed  the total number $n_{\inq}$ of variables $x_{j, B}, x_s$ of $\SLI[Y_1]$.
It is clear that $| E_{a_j} Y_1 | \le | E Y_1 |/2$ for every $j =1, \dots, m$.
Hence, the number of variables $x_{j, B}$ for a fixed $j$ is at most $2^{ | E Y_1 |/2 }-1$.  Therefore,
\begin{equation}\label{cr5}
r  \le n_{\inq}  \le m ( 2^{ | E Y_1 |/2 }-1) + 1 \le m \cdot 2^{ | E Y_1 |/2 } - 2 .
\end{equation}

Finally, we obtain from \eqref{cr4}--\eqref{cr5} that
 \begin{align}
 \begin{split}\label{cr5a}
| E Y_{2,Q_V} | <  r m^{r+1}  &  \le  ( m \cdot 2^{ | E Y_1 |/2 } -2)  m^{  m \cdot 2^{  | E Y_1 |/2 }  -1   } \\
 & <   2^{ | E Y_1| /2  }  m^{  m \cdot 2^{ | E Y_1 |/2 }  } \\
 & \le  2^{ | E Y_1| /2+ (\log_2 m) \cdot m \cdot 2^{| E Y_1 |/2}   } \\
 & <  2^{ (1+  m \cdot \log_2 m ) \cdot 2^{| E Y_1 |/2}   } \\ & <  2^{ m^2  \cdot 2^{| E Y_1 |/2}    } \\
 & \le  2^{ 2^{| E Y_1 |/2  +2\log_2 m  } }   .
 \end{split}
 \end{align}

It remains to show that the graph $Y_{2,Q_V}$ is connected.
\smallskip

Arguing on the contrary, assume that  the graph   $Y_{2,Q_V}$   is the disjoint union of its two subgraphs   $Y_3$ and $ Y_4$. First we assume that
\begin{equation}\label{case1}
\brr(Y_3) >0  \quad   \text{and}  \quad   \brr(Y_4) >0 .
\end{equation}

Clearly, $Y_3$ and $Y_4$ are  graphs with property (B).
Recall that the vertices of the graph   $Y_{2,Q_V}$ bijectively correspond to the inequalities of the combination $Q_V$,
see the proof of Lemma~\ref{lem2}.  In particular, we can consider  the combinations $Q_{3}$ and $Q_{4}$, whose inequalities bijectively correspond to the vertices of $Y_3$ and $Y_4$, resp. It is clear that $Q_V$ is the union of combinations $Q_{3}$ and $Q_{4}$ and
\begin{equation}\label{etaQ}
\eta(Q_V) = \eta( Q_{3} ) +  \eta(  Q_{4}) .
\end{equation}
We specify that the union $B_1  \sqcup B_2$  of two combinations  $B_1, B_2$ is the combination whose elements are all elements  of both $B_1  $ and $ B_2$, in particular, $|B_1  \sqcup B_2| =  |B_1| + |B_2|$.
\smallskip

Furthermore, the graphs  $Y_3$ and $Y_4$ could be
constructed from $Q_{3}$ and $Q_{4}$, resp., in  the same manner as $Y_{2,Q_V}$ was constructed from $Q_{V}$.
In particular,  the combinations  $Q_{3}$ and $Q_{4}$ belong to the domain of  the function $\sol$.
\smallskip

Invoking  Lemma~\ref{lem4}, denote $y_V(j)  := \sol( Q_{j} )$,  $j=3,4$.
We also denote
$$
\sum_{q \in Q_V}q^L =  - C(Q_V) x_s ,  \quad  \sum_{q \in Q_{j} } q^L =  - C(Q_{j}) x_s ,
$$
where $j = 3,4$.
\smallskip

Since $ Q_V =  Q_{3} \sqcup Q_{4}$, it follows that  $ C(Q_V) =  C(Q_{3}) + C(Q_{4})$.
According to the definition  \eqref{dfyy} of the function $\sol$, we have
\begin{equation}\label{etayX}
y_{V,i}  = \frac{\eta_i(Q_{V} ) }{ C(Q_{V}) } ,   \qquad   y_{V,i}(j)  = \frac{\eta_i(Q_{j}) }{ C(Q_{j})  }
\end{equation}
for all suitable $i, j$.
Hence, in view of  \eqref{etaQ}, for every $i = 1, \dots, m_{\inq}$, we obtain
\begin{align}\label{eqQC}
\begin{split}
y_{V,i} & = \frac{\eta_i(Q_{V} ) }{ C(Q_{V}) }   = \frac{\eta_i(Q_{3} )  +\eta_i(Q_{4} ) } { C(Q_{V}) }  \\
 & = \frac{C ( Q_{3} )  } { C(Q_{V}) } \cdot  \frac{\eta_i(Q_{3} )  } { C(Q_{3}) }      +
     \frac{C ( Q_{4} )  } { C(Q_{V}) }  \cdot   \frac{ \eta_i(Q_{4} ) } { C(Q_{4}) } \\
 & =  \lambda_3  y_{V, i}(3) +  \lambda_4  y_{V, i}(4)  ,
\end{split}
\end{align}
where $\lambda_3 =  \frac{C ( Q_{3} )  } { C(Q_{V}) } $ and  $\lambda_4 =  \frac{C ( Q_{4} )  } { C(Q_{V}) } $ are positive rational numbers that satisfy $\lambda_3+ \lambda_4 =1$.
\smallskip

The equalities  \eqref{eqQC} imply that
\begin{equation}\label{ylam}
y_V = \lambda_3  y_V(3) +  \lambda_4  y_V(4)  .
\end{equation}

Since $y_V$ is a vertex of the polyhedron  \eqref{fpdp},  $y_V(3)$ and $ y_V(4) $ are vectors in  \eqref{fpdp},    and $0 < \lambda_3, \lambda_4 < 1$,  $\lambda_3+ \lambda_4 =1$, it follows  from  \eqref{ylam}  that
$$
y_V(3) =  y_V(4) =  y_V .
$$
Hence, in view of \eqref{etayX}, the tuples $\eta(Q_V)$, $\eta( Q_{3} )$, $ \eta(  Q_{4})$  that have integer entries  are rational multiples of each other.  Referring to \eqref{etaQ}, we conclude that the entries  of $\eta(Q_V)$  are not coprime, contrary to the definition of the combination $Q_V$.  This contradiction completes the case  \eqref{case1}.
\medskip

Assume that  the graph   $Y_{2,Q_V}$   is the disjoint union of its two subgraphs   $Y_3$ and $ Y_4$ such that
\begin{equation}\label{case2}
\brr(Y_3) >0  \quad   \text{and}  \quad   \brr(Y_4) = 0 .
\end{equation}
\smallskip

Let $2Q_V$ denote  the combination such that  $\eta(2Q_V) = 2 \eta(Q_V)$, i.e., to get $2Q_V$  from $Q_V$ we double the number of occurrences of each inequality in $Q_V$. Using this combination $2Q_V$, we can construct, as in the proof of Lemma~\ref{lem2}, a  graph $Y_{2, 2Q_V}$ which consists of two disjoint copies of $Y_{2,Q_V}$, denoted $\bar Y_{2,Q_V}$ and $\wht  Y_{2,Q_V}$. Since  $Y_{2,Q_V} = Y_3 \cup Y_4$, we can represent
the graph $Y_{2, 2Q_V}$ in the form
$$
Y_{2, 2Q_V} = Y_5    \cup Y_6,
$$
 where $ Y_5 :=  \bar Y_3  \cup \bar Y_4  \cup \wht Y_4$ and  $ Y_6 :=  \wht Y_3$
 \smallskip

Clearly, $\brr(Y_5) >0 $,  $\brr(Y_6) >0 $,   and  both  $Y_5,  Y_6$ have  property (B).
As above, we remark that the vertices of  $Y_{2, 2Q_V}$ are in bijective correspondence with the inequalities of  $2Q_V$. Hence,   the combination $2Q_V$ is the  union of the combinations $Q_{5}$ and $Q_{6}$ that consist of those inequalities that correspond to the vertices of   $Y_5$ and $Y_6$, resp., and that can be used to construct the graphs   $Y_5$ and $Y_6$ in the same manner as $Y_{2,Q_V}$ was constructed from $Q_V$.
\smallskip

As above, we can write
\begin{equation}\label{etaQa}
\eta(2Q_V) =    \eta( Q_{5} ) +  \eta(  Q_{6}) .
\end{equation}

Note that  the combinations  $Q_{5}$ and $Q_{6}$ belong to the domain of  the function $\sol$.
Using  Lemma~\ref{lem4}, denote $y_V(j)  := \sol( Q_{j} )$,  $j=5,6$.
As above, denote
$$
\sum_{q \in 2Q_V}q^L =  - C(2Q_V) x_s ,  \quad  \sum_{q \in Q_{j} } q^L =  - C(Q_{j}) x_s ,
$$
where $j = 5,6$.
\smallskip

Since $ 2Q_V =  Q_{5} \sqcup Q_{6}$, it follows that  $ C(2Q_V) =  C(Q_{5}) + C(Q_{6})$.
According to the definition  \eqref{dfyy} of the function $\sol$, we have
\begin{equation}\label{etaya}
y_{V,i}  = \frac{\eta_i(Q_{V} ) }{ C(Q_{V}) } = \frac{\eta_i(2Q_{V} ) }{ C(2Q_{V}) } ,  \qquad   y_{V,i}(j)  = \frac{\eta_i(Q_{j}) }{ C(Q_{j})  }
\end{equation}
for all suitable $i, j$.
Hence, in view of  \eqref{etaQa}, for every $i = 1, \dots, m_{\inq}$, we obtain
\begin{align}\label{eqQCa}
\begin{split}
y_{V,i} & = \frac{\eta_i(2Q_{V} ) }{ C(2Q_{V}) }   = \frac{\eta_i(Q_{5} )  +\eta_i(Q_{6} ) } { C(2Q_{V}) }  \\
 & = \frac{C ( Q_{5} )  } { C(2Q_{V}) } \cdot  \frac{\eta_i(Q_{5} )  } { C(Q_{5}) }      +
     \frac{C ( Q_{6} )  } { C(2Q_{V}) }  \cdot   \frac{ \eta_i(Q_{6} ) } { C(Q_{6}) } \\
 & =  \lambda_5  y_{V, i}(5) +  \lambda_6  y_{V, i}(6)  ,
\end{split}
\end{align}
where $\lambda_5 =  \frac{C ( Q_{5} )  } { C(2Q_{V}) } $ and  $\lambda_6 =  \frac{C ( Q_{6} )  } { C(2Q_{V}) }$ are positive rational numbers that satisfy $\lambda_5+ \lambda_6 =1$.
\smallskip

The equalities  \eqref{eqQCa} imply that
\begin{equation}\label{ylama}
y_V = \lambda_5  y_V(5) +  \lambda_6  y_V(6)  .
\end{equation}

Since $y_V$ is a vertex of the polyhedron  \eqref{fpdp}, $y_V(5)$ and $ y_V(6) $ are vectors in  \eqref{fpdp},     and $0 < \lambda_5, \lambda_6 < 1$,  $\lambda_5+ \lambda_6 =1$, it follows  from  \eqref{ylama}  that
$$
y_V(5) =  y_V(6) =  y_V.
$$
Hence, in view of \eqref{etaya}, the tuples $\eta(2Q_V)$, $\eta( Q_{5} )$, $ \eta(  Q_{6})$  that have integer entries   are rational multiples of each other.
Referring to \eqref{etaQa} and keeping in mind that   the  entries  of $\eta(Q_V) $  are  coprime, we conclude that  \begin{equation}\label{QVe}
\eta(Q_V) = \eta( Q_{5} ) = \eta(  Q_{6}) ,
\end{equation}
i.e., $Q_V = Q_{5} = Q_{6}$.   However, $Y_6 = \wht Y_3$ and $\wht  Y_3$ is a subgraph of $\wht  Y_{2,Q_V}$ that consists of several connected components of $\wht  Y_{2,Q_V}$ and  $Y_3 \ne Y_{2,Q_V}$. Hence, $ Q_{5} \ne  Q_V $.
This contradiction to \eqref{QVe} completes the second case \eqref{case2}. Thus  the graph
 $Y_{2,Q_V}$ is connected and
Lemma~\ref{lem7} is proven.   \end{proof}

\section{Proofs of Theorems~\ref{th1} and  \ref{pr1}}

\begin{proof}[Proof of Theorem~\ref{th1}]   (a)  Suppose that  $H_1$ is a finitely generated noncyclic subgroup of the free group $F_{U_m} = \pi_1(U_m, o_1)$ of rank $m -1 \ge 2$.
 Conjugating $H_1$ if necessary, we may assume that the reduced $U_m$-graph of $H_1$, denoted as above by $Y_1$, coincides with its core,
 $\core(Y_1) =  Y_1$.
\smallskip

As in Section~3, see  \eqref{SLI},   consider a system of linear inequalities $\SLI[Y_1]$ with integer coefficients associated with the graph $Y_1 = Y_1(H_1)$ and the LP-problem
\begin{equation}\label{lppf}
    \max \{ - x_s \mid \SLI[Y_1] \}  .
\end{equation}
According to Theorem~A and Lemma~\ref{lem6}, the maximum of the  LP-problem
\eqref{lppf} is equal to $-\sigma(Y_1) \brr(Y_1)$, as required.
\smallskip

(b) This part follows from the definitions and  Lemmas~\ref{lem5}--\ref{lem7} applied to the graph $Y_1$ of $H_1$.
\smallskip

(c)  It follows from the definitions of Section~3 that we can algorithmically write down  the system   $\SLI[Y_1]$ and this can be done in exponential time {}in the size of $Y_1$. Note that the size of  the graph $Y_1$ is polynomial in the size  of the input (which is a generating set for $H_1$ or the graph $Y_1$ itself). It follows from the bound \eqref{cr5} and from the definitions that the number $n_{\inq}$ of variables of
$ \SLI[Y_1]$ and the number $m_{\inq}$ of inequalities of $ \SLI[Y_1]$ satisfy
\begin{align*}
n_{\inq} & \le m( 2^{|E Y_1  |/2} -1) +1 < m 2^{|E Y_1  |/2} , \\
m_{\inq} & < 2\cdot 2^{ |E_{a_1} Y_1 | + \cdots + |E_{a_m} Y_1 |} = 2^{ |E Y_1 |/2  + 1} .
\end{align*}

Furthermore,  every number in  $ \SLI[Y_1]$ is an integer whose absolute value  is bounded by
$\max(m-2, 2 \brr(Y_1))$, see \eqref{a3a}.
Hence, the size of the primal LP-problem $\max \{ - x_s \mid \SLI[Y_1] \}$ as well as the size of the dual problem \eqref{dlp}  are exponential {}in the size of the input.
By  Lemma~\ref{lem6}, the optimal solution to the  dual problem \eqref{dlp}  is equal to
$$
 - \sigma( Y_1)  \brr (Y_1) = - \sigma( H_1)  \brr (H_1) .
$$
Since an LP-problem $\max \{cx \mid Ax \le b \} $ can be solved in deterministic polynomial time {}in the size of the problem, see \cite{S86}, and since the reduced rank $\brr(Y_1) = \brr(H_1)$ can be computed in polynomial time in the size of the input, it follows that
the WN-coefficient $\sigma(H_1)$ of $H_1$ can be computed  in exponential time  ({}in the size of the input).
\smallskip

We  recall  again that the size of the dual LP-problem \eqref{dlp}, similarly to the size of
the primal LP-problem $\max \{ - x_s \mid \SLI[Y_1] \} $, is exponential .
Next, a vertex solution  $y_V$  to  the LP-problem \eqref{dlp} can be computed in polynomial time in the
size of  \eqref{dlp}, see \cite{S86}. Note that here and below we use the notation of the proof of Lemma~\ref{lem7}.  Hence, a vertex solution $y_V$  to \eqref{dlp}  can be computed in exponential time {}in the size of $Y_1$. Using the function $ \sol$, we can compute    a  combination with repetitions $Q_V$,  such that
$\sol( Q_V) =   y_V$ and entries of  $Q_V$ are coprime, in polynomial
time {}in the size of $y_V$. The size of the vertex $y_V$, as was established in the proof of Lemma~\ref{lem7}, see
\eqref{cr1}--\eqref{cr3}, \eqref{cr5},  is  exponential. Hence, the  combination $Q_V$  can also be  computed in
 exponential time.
\smallskip

In view of inequalities \eqref{cr3a} and \eqref{cr5a}, we obtain that
\begin{align}\label{cr6}
|  Q_V| < rm^{r} <  2^{ 2^{| E Y_1 |/2  +2\log_2 m  } }  .
\end{align}

This bound, in particular, means that  every inequality $q \in \SLI[Y_1]$ occurs in $Q_V$ at most $2^{ 2^{| E Y_1 |/2  +2\log_2 m  } }$ times, hence, the number ${n}_{Q_V}(q)$ of occurrences of $q$ in $Q_V$ can be written by using
at most $2^{| E Y_1 |/2  +2\log_2 m  } $ bits.
\smallskip

As in the proofs of Lemmas~\ref{lem2}, \ref{lem7}, we construct a graph $Y_{2, Q_V}$ whose vertices are in bijective correspondence with inequalities of $Q_V$ and whose edges are defined by means of an involution $\iota_V$ on the set of terms $\pm x_{j, D}$ of left hand sides $q^L$ of the inequalities $q \in Q_V$.

\begin{lemma}\label{lem8}
The graph $Y_{2, Q_V}$ can be constructed in  deterministic exponential time {}in the size of $Y_1$.
\end{lemma}

\begin{proof} We need to explain how to compute the involution $\iota_V$ as above in exponential time ({}in the size of $Y_1$).
To do this, for each variable $x_{j, D}$ of the system $\SLI[Y_1]$, see  \eqref{SLI}, we consider a  graph $\Lambda_{j, D}$ whose set of vertices is the subset
 $$
 R_V := \{ q \mid q \in Q_V \}
 $$
 of $\SLI[Y_1]$ formed with the inequalities of $Q_V$. If $q_1, q_2 \in R_V$ are distinct, $q_1^L$ contains a term $\pm x_{j, D}$ and $q_2^L$ contains a term $\mp x_{j, D}$, resp., then we draw an edge $e$  in $\Lambda_{j, D}$ that connects $q_1$ and $q_2$. In other words, if there is a potential cancellation between terms
 $\pm x_{j, D}$  in the sum $q_1^L + q_2^L$ then  $\Lambda_{j, D}$  contains an edge
 that connects $q_1$ and $q_2$.
\smallskip

 It is clear that $\Lambda_{j, D}$  is a bipartite graph so that every edge connects a vertex of type \eqref{a3} and a vertex of type \eqref{a4}.
\smallskip

Consider a weight function
\begin{align}\label{wjD}
\om_{j,D} : E  \Lambda_{j, D} \to \mathbb Z ,
\end{align}
where $\mathbb Z$ is the set of integers, such that
$\om_{j,D}(e^{-1}) = \om_{j,D}(e) \ge 0$ and
$$
\sum_{e_- = q} \om_{j,D}(e) = {n}_{Q_V}(q) ,
$$
where  ${n}_{Q_V}(q)$ is  the number of occurrences of $q$ in $Q_V$.
\smallskip

Our nearest goal is to show that such a weight function $\om_{j,D}$  can be computed in exponential time for every pair of indices $j, D$.  Note that ${n}_{Q_V}(q) = \eta_i(Q_V)$ if $q = q_i$ in the notation of
$\eqref{solQ}$.
\smallskip

Let the edge set
$$
E \Lambda_{j, D}  =  \{ e_1, e_1^{-1}, e_2, e_2^{-1}, \dots, e_{ | E \Lambda_{j, D} |/2  }, e_{ | E \Lambda_{j, D} |/2  }^{-1}     \}
$$
of the graph $\Lambda_{j, D}$  be indexed as indicated and let $(e_i)_-$ be a vertex of type  \eqref{a3} for every $i$.
\smallskip

We will define the numbers
$\omd(e_i)$ by induction for $i =1,2, \ldots,  | E \Lambda_{j, D} |/2$  by the following procedure which also assigns
intermediate weights $\om_{j,D}(q)$ to vertices $q \in R_V$ of $\Lambda_{j, D}$.
\smallskip

Originally, we set
$$
\om_{j,D}(q) :=  {n}_{Q_V}(q)
$$
for every $q \in R_V$.  For each $i \ge 1$, if the edge $e_i$ goes from $q_1$ to $q_2$ then we set
$$
\omd(e_i) := \min(\omd(q_1), \omd(q_2) )
$$
and redefine the weights of $q_1$ and $q_2$ by setting
\begin{align*}
\omd'(q_1) & := \omd(q_1) - \min(\omd(q_1), \omd(q_2) ) , \\
\omd'(q_2) & := \omd(q_2) - \min(\omd(q_1), \omd(q_2) ) ,
\end{align*}
where $\omd'(q_1) $ denotes the new weight.
\smallskip

 Note that the assignment of a nonnegative weight $\omd(e_i)$ to the edge $e_i$, connecting $q_1$ and $q_2$, can be interpreted as making $\omd(e_i)$ cancellations between terms $\pm  x_{j, D}$ of the subsums
 $$
 \underbrace{ q_1^L + \cdots +q_1^L}_{\text {   $ {n}_{Q_V}(q_1)$ times } }    \quad \mbox{ and} \quad
 \underbrace{  q_2^L + \cdots +q_2^L}_{\text {   $ {n}_{Q_V}(q_2)$ times } }
 $$
 of the sum in the left hand side of the equality
\begin{align}\label{QVr}
 \sum_{q \in Q_V} q^L = -2\brr(Y_1)x_s .
 \end{align}

 Analogously,  the intermediate weight $\omd(q_1)$   of a vertex  $q_1 \in V \Lambda_{j, D}$  can be interpreted as the number of terms  $\pm  x_{j, D}$ of the subsum
 $$
 \underbrace{ q_1^L + \cdots +q_1^L}_{\text {   $ {n}_{Q_V}(q_1)$ times } }
 $$
 which are still uncancelled in  the left hand side of \eqref{QVr}.
 \smallskip

 Therefore, in view of the equality   \eqref{QVr},   in the end of this process, we will obtain that the weights $\omd(q)$ of all vertices $q \in R_V$ are zeros, i.e.,
 cancellations of the terms $\pm x_{j, D}$ are complete, and the weights $\omd(e_i)$ of all edges $e_i$  have desired properties.
\smallskip

It is clear that  the foregoing inductive procedure makes it possible to compute such a weight function $\omd$ in polynomial time {}in the size of the graph $\Lambda_{j, D}$ and in the size of  numbers $n_{Q_V}(q)$, $q \in R_V$, written in binary.  Hence, we can compute weight functions $\omd$ for all pairs $j, D$  in exponential time {}in the size of $Y_1$.

\smallskip

Now we will define the involution $\iota_V$ based on  the weight functions $\omd$.
\smallskip

Let elements of the set $R_V = \{ q_1, \ldots, q_{|R_V|} \}$ be indexed as indicated and  let elements of the combination
\begin{align}\label{QVc}
\begin{split}
Q_V  =  [[ & q_{1,1}, q_{1,2}, \ldots, q_{1,{n}_{Q_V}(q_1)}, \\
& q_{2,1}, q_{2,2}, \ldots, q_{2,{n}_{Q_V}(q_2)}, \\ & \ldots,  \\
   &  q_{|R_V|,1}, q_{|R_V|,2}, \ldots, q_{|R_V|,{n}_{Q_V}(q_{|R_V|})} ]] ,
\end{split}
\end{align}
where $q_{i,\ell} = q_i \in R_V$ for all possible $i, \ell$, be double  indexed  as indicated  according to the indices introduced on elements of $R_V$.
\smallskip

Since the vertices of the graph $Y_{2, Q_V}$  are in bijective correspondence with elements of $Q_V$, see the proof of Lemma~\ref{lem2}, we can also write
$$
V Y_{2, Q_V} = \{ u_{i,\ell} \mid 1 \le i \le |R_V|, \ 1 \le \ell  \le {n}_{Q_V}(q_i) \} ,
$$
where
\begin{align}\label{uqC}
 u_{i, \ell}   \mapsto    q_{i, \ell}
\end{align}
under this correspondence.
\smallskip

Let $q_i \in R_V$ be fixed and  let
$$
q_{m_1(i)}, \ldots, q_{m_{t_i}(i)}
$$
be all the vertices of $\Lambda_{j, D}$, where $ m_1(i) < \cdots < m_{t_i}(i)$, that are connected to $q_i$ by edges $f_1, \ldots, f_{t_i}$, resp.,  in $\Lambda_{j, D}$ with positive weights $\omd(f_1), \ldots, \omd(f_{t_i})$, resp.  We assume that  $q_i$ is the terminal vertex of  the edges $f_1, \ldots, f_{t_i}$.
\smallskip

Recall that $q_i^L$ contains a single term $\pm x_{j, D}$, where the sign is a minus if  $q_i$ has type \eqref{a3}  and the sign is a plus if  $q_i$ has type \eqref{a4}.

\smallskip
According to the weights  $\omd(f_1), \ldots, \omd(f_{t_i})$, we will define $( j, D, i, t)$-blocks of consecutive elements of $Q_V$, see  \eqref{QVc}, in the following manner. The  $(j, D, i, 1)$-block consists of the first $\omd(f_1)$ elements  of the sequence
\begin{align}\label{seqq}
q_{i,1}, q_{i,2}, \ldots, q_{i,{n}_{Q_V}(q_i)} .
\end{align}
The $(j, D, i, 2)$-block consists of the next $\omd(f_2)$ elements  of the sequence \eqref{seqq}  and so on.
The $(j, D, i, t_i)$-block consists of the last $\omd(f_{t_i})$ elements of the sequence \eqref{seqq}.
Since
$$
\sum_{t=1}^{t_i} \omd(f_t) = {n}_{Q_V}(q_i) ,
$$
and $\omd(f_t)  >0$ for every $t$,   it follows that these  $(j, D, i, t)$-blocks, where $t = 1, \dots, t_i$ and
$ j, D, i$  are fixed, will form a partition of the sequence \eqref{seqq} into $t_i$ subsequences.
\smallskip

We  emphasize  that every $(j, D, i, t)$-block is associated with a  vertex $q_i  \in V \Lambda_{j, D} =  R_V$ and with an edge $f_t$ of $\Lambda_{j, D}$
so that $f_t$ ends in $q_i$ and $\omd(f_t) >0$.  In particular, for every $(j, D, i, t)$-block,  associated with a vertex $q_i \in R_V$ and with an edge $f_t$ of  $\Lambda_{j, D}$, we have another $(j, D, i', t')$-block,  associated with a vertex $q_{i'} \in R_V$ and with an edge $f'_{t'}$ of $\Lambda_{j, D}$, so that $q_{i'}  \ne q_i$ and $f'_{t'} = f_t^{-1}$. Here $ f'_1, \ldots, f'_{t'_{i'}}$ are the edges of $\Lambda_{j, D}$ defined for $q_{i'}$ in the same fashion as the edges  $f_1, \ldots, f_{t_i}$ of $\Lambda_{j, D}$ were defined for $q_{i}$.
Note that $i'' = i$ and $f''_{t''} = f_t$ in this notation.
\smallskip

We define the involution $\iota_V$ so that all the  terms   $\pm x_{j, D}$ of the inequalities of the $(j, D, i, t)$-block are mapped by $\iota_V$ to the terms $\mp x_{j, D}$ of the inequalities of the $(j, D, i', t')$-block  in the natural increasing order of elements in the block.
\smallskip

Equivalently, we can say that  the vertices
\begin{align*}
& u_{i,  \omd(f_1) + \cdots + \omd(f_{t-1}) +1},  \\
& u_{i,  \omd(f_1) + \cdots + \omd(f_{t-1}) +2},   \quad   \dots , \\
& u_{i,  \omd(f_1) + \cdots + \omd(f_{t})} ,
\end{align*}
that correspond to $\omd(f_{t})$ inequalities of the  $(j, D, i, t)$-block, see \eqref{uqC},
are connected in $Y_{2, Q_V}$ to the vertices
\begin{align*}
&  u_{i',  \omd(f'_1) + \cdots + \omd(f'_{t'-1}) +1},  \\
 & u_{i',  \omd(f'_1) + \cdots + \omd(f'_{t'-1}) +2},  \quad   \dots , \\
 &  u_{i',  \omd(f'_1) + \cdots + \omd(f'_{t'})},
\end{align*}
resp.,   that correspond to $\omd(f'_{t'})$ inequalities of the  $(j, D, i', t')$-block,
by edges whose labels are $a_j$ if $q_i$ has type \eqref{a3}  or by  edges whose labels are $a_j^{-1}$ if $q_i$ has type \eqref{a4} (here we assume that the edges start at $u_{i, \ell}$ vertices).
It is clear that the foregoing construction of the involution  $\iota_V$ can be done in exponential time.
Therefore, the graph $Y_{2, Q_V}$ can also  be constructed in exponential time, as required.
 Lemma~\ref{lem8} is proved.
\end{proof}

Since the graph $Y_{2,  Q_V}$  can be constructed in exponential time, it follows from Lemma~\ref{lem7} that we can output $ Y_{2,  Q_V}$ as the Stallings graph of the desired subgroup $H_2^*$ of part (b) of Theorem~\ref{th1}.  The proof of Theorem~\ref{th1} is now complete.  \end{proof}

\medskip

It is worthwhile to mention that our construction of the  graph $Y_{2, Q_V}$ is somewhat succinct (cf. the definition of succinct representations of  graphs in \cite{S86}) in the sense that,
despite the fact that the size of $Y_{2, Q_V}$ could be doubly exponential, we are able to give a description of $Y_{2, Q_V}$  in exponential time ({}in the size of $Y_1$).  In particular, the vertices of  $Y_{2, Q_V}$    are represented by exponentially long bit strings and the edges of $Y_{2, Q_V}$  are  drawn in blocks. As a result,
we can find out in exponential time  whether two given vertices of  $Y_{2, Q_V}$  are connected by an edge labelled by  given letter $a_{j}^{\pm 1}$, $a_j \in \A$.

\medskip

\begin{proof}[Proof of Theorem~\ref{pr1}]
(a)  This is immediate from part (c) of Theorem~\ref{th1}.
\smallskip

(b) Suppose that a finitely generated subgroup $H$ of the free group $F = \pi_1(U_m, o_1)$  is not compressed and $K$ is a subgroup of $F$ such that $K$ contains $H$ and  $K$  has a minimal reduced rank $\brr(K)$ such that $\brr(K) < \brr(H)$. Let $Y$ and $ Z$ be reduced $U_m$-graphs of $H$ and $K$, resp.
\smallskip

Since $K$ contains $H$, it follows that there is a locally injective map
 \begin{align}\label{muyz}
\mu : Y \to Z  .
\end{align}

Suppose that $\mu $ is not surjective.  Then there is a subgroup $K'$ of $F$, whose graph is $\mu(Y)$, such that $K'$ contains $H$, $K'$ is a free factor of $K$ and $\brr(K') < \brr(K)$. This inequality, however, contradicts  the minimality of $\brr(K)$.
Hence,  the map  $\mu $ must be surjective. This means that the graph $Z$  with
$$
\brr(Z) = \brr(K) < \brr(H) =\brr(Y)
$$
can be obtained from the graph $Y$ by a finite sequence of operations each of which is identification of two edges whose images are equal in $Z$, i.e.,  the map  \eqref{muyz}  can be factored into a product of  maps that go  through a finite sequence of graphs (not necessarily reduced) each of which is obtained from the previous one by identification of a pair of edges with the same $\ph$-label. Note that the edges of such pairs need not have the same initial or terminal vertices.
\smallskip

Therefore, our  nondeterministic polynomial time algorithm that verifies whether $H$ is {\em not} compressed in $F$ can be run as follows. Starting with the reduced graph $Y$ of $H$,   we   nondeterministically perform a finite sequence of identification of pairs of edges in $Y$ with the same $\ph$-label and, if the resulting $U_m$-graph $Z$ is reduced and satisfies $\brr(Z)  < \brr(Y) $, we accept and  conclude that $H$ is not compressed. Since each single edge identification  decreases the number of edges, it follows that the algorithm runs in nondeterministic linear time {}in the size of $Y$, as required. Theorem~\ref{pr1} is proven.
\end{proof}
\medskip

In conclusion, we mention that it would be of interest  to extend our techniques
to be able to compute the Hanna Heumann coefficient    $\sup_{K} \Big\{ \frac{\brr (H \cap  K)}{\brr (H) \brr (K) } \Big\}$ of a finitely generated noncyclic subgroup $H$ of a free group $F$ and
to algorithmically decide whether $H$ is inert. However, it is not clear at all how to replace in our arguments  the core of the pullback by its single connected component or, alternatively,  guarantee,  at least in certain situations,  that the entire pullback is connected.
\medskip

{\em Acknowledgements.} The author is grateful to the referee for making many helpful remarks and suggestions.

\end{document}